\documentclass[11pt,reqno]{article}
\usepackage{fullpage,graphicx,xcolor,natbib}
\usepackage[colorlinks,citecolor=blue,urlcolor=blue,breaklinks]{hyperref}

\usepackage{amssymb}
\usepackage{amsthm} 
\usepackage{amsmath} 

\usepackage{mathtools}
\mathtoolsset{showonlyrefs}
\usepackage{xparse}

\usepackage{authblk}

\usepackage[utf8]{inputenc}
\usepackage[T1]{fontenc}
\usepackage{newtxtext}
\usepackage{booktabs}       
\usepackage{microtype}      
\usepackage{natbib}         

\usepackage{etoolbox}
\usepackage[enable]{easy-todo}

\usepackage{graphicx} 
\usepackage{float}
\usepackage{multirow}

\usepackage{subfig}

\usepackage{algorithmic,algorithm}


\oddsidemargin 0in \evensidemargin 0in \textwidth 6.5in \topmargin -5ex \textheight 9.2in
\let \oldsection \section
\renewcommand{\section}{\vspace{3ex plus 1ex}\oldsection}

\usepackage{empheq}

\newtheorem{theorem}{Theorem}[section]

\newtheorem{definition}[theorem]{Definition}
\newtheorem{remark}[theorem]{Remark}
\newtheorem{lemma}[theorem]{Lemma}
\newtheorem{corollary}[theorem]{Corollary}


\newcommand{\BEAS}{\begin{eqnarray*}}
\newcommand{\EEAS}{\end{eqnarray*}}
\newcommand{\BEA}{\begin{eqnarray}}
\newcommand{\EEA}{\end{eqnarray}}
\newcommand{\BEQ}{\begin{equation}}
\newcommand{\EEQ}{\end{equation}}
\newcommand{\BIT}{\begin{itemize}}
\newcommand{\EIT}{\end{itemize}}
\newcommand{\BNUM}{\begin{enumerate}}
\newcommand{\ENUM}{\end{enumerate}}

\newcommand{\BA}{\begin{array}}
\newcommand{\EA}{\end{array}}










\newcommand{\argmax}{\mathop{\rm argmax}}




\title{Projection-Free Optimization on Uniformly Convex Sets}

\author[1,3]{Thomas Kerdreux}
\author[2,3]{Alexandre d'Aspremont}
\author[4,5]{Sebastian Pokutta}
\affil[1]{INRIA, Paris} \affil[2]{CNRS, UMR 8548}
\affil[3]{D.I., \'Ecole Normale Sup\'erieure, Paris, France}
\affil[4]{Zuse Institute, Berlin, Germany}
\affil[5]{Technische Universit{\"a}t, Berlin, Germany}

\date{\today}

\begin{document}

\maketitle
\begin{abstract}
The Frank-Wolfe method solves smooth constrained convex optimization problems at a generic sublinear rate of $\mathcal{O}(1/T)$, and it (or its variants) enjoys accelerated convergence rates for two fundamental classes of constraints: polytopes and strongly-convex sets. Uniformly convex sets non-trivially subsume strongly convex sets and form a large variety of \textit{curved} convex sets commonly encountered in machine learning and signal processing. For instance, the $\ell_p$-balls are uniformly convex for all $p > 1$, but strongly convex for $p\in]1,2]$ only. We show that these sets systematically induce accelerated convergence rates for the original Frank-Wolfe algorithm, which continuously interpolate between known rates. 
Our accelerated convergence rates emphasize that it is the curvature of the constraint sets -- not just their strong convexity -- that leads to accelerated convergence rates. 
These results also importantly highlight that the Frank-Wolfe algorithm is adaptive to much more generic constraint set structures, thus explaining faster empirical convergence.
Finally, we also show accelerated convergence rates when the set is only locally uniformly convex and provide similar results in online linear optimization.
\end{abstract}

\section{Introduction}
The Frank-Wolfe method \citep{frank1956algorithm} (Algorithm \ref{algo:FW_general}) is a projection-free algorithm designed to solve
\BEQ\label{eq:opt_problem}\tag{OPT}
\underset{x\in\mathcal{C}}{\text{argmin }} f(x),
\EEQ
where $\mathcal{C}$ is a compact convex set and $f$ a smooth convex function.
Many recent algorithmic developments in this family of methods are motivated by appealing properties already contained in the original Frank-Wolfe algorithm.
Each iteration requires to solve a Linear Minimization Oracle (see line \ref{line:FW_LMO} in Algorithm \ref{algo:FW_general}), instead of a projection or proximal operation that is not computationally competitive in various settings.
Also, the Frank-Wolfe iterates are convex combinations of extreme points of $\mathcal{C}$, the solutions of the Linear Minimization Oracle. 
Hence, depending on the extremal structure of $\mathcal{C}$, early iterates may have a specific structure, being, \textit{e.g.} , sparse or low rank for instance, that could be traded-off with the iterate approximation quality of problem \eqref{eq:opt_problem}.
These fundamental properties are among the main features that contribute to the recent revival and extensions of the Frank-Wolfe algorithm \citep{Clar10,Jagg11} used for instance in large-scale structured prediction \citep{bojanowski2014weakly,bojanowski2015weakly,alayrac2016unsupervised,seguin2016instance,miech2017learning,peyre2017weakly,miech2018learning}, quadrature rules in RKHS \citep{bach2012equivalence,lacoste2015sequential,futami2019bayesian}, optimal transport \citep{courty2016optimal,vayer2018optimal,paty2019subspace,luise2019sinkhorn}, and many others.

\begin{algorithm}[h!]
  \caption{Frank-Wolfe Algorithm}\label{algo:FW_general}
  \begin{algorithmic}[1]
    \REQUIRE $x_0 \in \mathcal{C}$, $L$ upper bound on the Lipschitz constant.
    \FOR{$t=0, 1, \ldots, T $}
    	\STATE $ v_t \in \underset{v\in\mathcal{C}}{\text{argmax }} \langle -\nabla f(x_t); v - x_t\rangle$ \hfill $\triangleright$ \text{Linear minimization oracle} \label{line:FW_LMO}
    	
        \STATE $\gamma_t = \underset{\gamma\in[0,1]}{\text{argmin }} \gamma \langle v_t - x_t; \nabla f(x_t)\rangle + \frac{\gamma^2}{2} L ||v_t - x_t||^2$ \hfill $\triangleright$ \text{Short step} \label{line:step_size}
        
        \STATE $x_{t+1} = (1 - \gamma_t)x_{t} + \gamma_t v_{t}$ \hfill $\triangleright$ \text{Convex update} \label{line:FW_update}
    \ENDFOR
  \end{algorithmic}
\end{algorithm}

\paragraph{Uniform Convexity.}
Uniform convexity is a global quantification of the curvature of a convex set~$\mathcal{C}$. There exists several definitions, see for instance, \citep[Theorem 2.1.]{goncharov2017strong} and \citep{abernethy2018faster,Molinaro2020} for the strongly convex case. Here, we focus on the generalization of a classic definition of the strong convexity of a set \citep{garber2015faster}.

\begin{definition}[$\gamma$ uniform convexity of $\mathcal{C}$]\label{def:set_uniform_convexity}
A closed set $\mathcal{C}\subset\mathbb{R}^d$ is $\gamma_{\mathcal{C}}$-uniformly convex with respect to a norm  $||\cdot||$, if for any $x,y\in\mathcal{C}$, any $\eta\in[0,1]$ and any $z\in\mathbb{R}^d$ with $||z|| = 1$, we have
\[
\eta x + (1-\eta)y + \eta (1 - \eta ) \gamma_{\mathcal{C}}(||x-y||) z \in \mathcal{C},
\]
where $\gamma_{\mathcal{C}}(\cdot)\geq 0$ is a non-decreasing function. In particular when there exists $\alpha >0$ and $q >0$ such that $\gamma_{\mathcal{C}}(r)\geq \alpha r^q$, we say that $\mathcal{C}$ is $(\alpha, q)$-uniformly convex or $q$-uniformly convex.
\end{definition}

The uniform convexity assumption strengthens the convexity property of $\mathcal{C}$ that any line segment between two points is included in $\mathcal{C}$. It requires a scaled unit ball to fit in $\mathcal{C}$ and results in curved sets.
Strongly convex sets are uniformly convex sets for which $\gamma_{\mathcal{C}}(r)\geq\alpha r^2$, \textit{i.e.} $(\alpha, 2)$-uniformly convex sets.
Two common families of uniformly convex sets are the $\ell_p$-balls and $p$-Schatten balls which are uniformly convex for any $p>1$ but strongly convex for $p\in]1,2]$ only, \textit{i.e.} $2$-uniformly convex sets for $p\in]1,2]$.

\paragraph{Convergence Rates for Frank-Wolfe.}
The Frank-Wolfe algorithm admits a tight \citep{canon1968tight,jaggi2013revisiting,lan2013complexity} general sublinear convergence rate of $\mathcal{O}(1/T)$ when $\mathcal{C}$ is a compact convex set and $f$ is a convex $L$-smooth function.
However, when the constraint set $\mathcal{C}$ is strongly-convex and $\text{inf}_{x\in\mathcal{C}}||\nabla f(x)|| > 0$, Algorithm~\ref{algo:FW_general} enjoys a linear convergence rate \citep{levitin1966constrained,demyanov1970}. Later on, the work of \citep{dunn1979rates} showed that linear rates are maintained when the constraint set satisfies a condition subsuming local strong-convexity.
Interestingly, this linear convergence regime \emph{does not} require the strong-convexity of $f$, \textit{i.e.} the lower quadratic additional structure comes from the constraint set rather than from the function. When $x^*$ is in the interior of $\mathcal{C}$ and $f$ is strongly convex, Algorithm \ref{algo:FW_general} also enjoys a linear convergence rate \citep{guelat1986some}.

These two linear convergence regimes can both become arbitrarily bad as $x^*$ gets close to the border of $\mathcal{C}$, and do not apply in the limit case where the unconstrained optimum of $f$ lies at the boundary of $\mathcal{C}$.
In this scenario, when the constraint set is strongly convex, \cite{garber2015faster} prove a general sublinear rate of $\mathcal{O}(1/T^2)$ when $f$ is $L$-smooth and $\mu$-strongly convex.
In early iterations, these convergence rates can beat badly-conditioned linear rates.

Other structural assumptions are known to lead to accelerated convergence rates. However, these require elaborate algorithmic enhancements of the original Frank-Wolfe algorithm. Polytopes received much attention in particular, with \textit{corrective} or \textit{away} algorithmic mechanisms \citep{guelat1986some,hearn1987restricted} that lead to linear convergence rates under appropriate structures of the objective function \citep{garber2013linearly,lacoste2013affine,lacoste2015global,beck2017linearly,gutman2018condition,pena2018polytope}. 
Accelerated versions of Frank-Wolfe, when the constraint set is a trace-norm ball (a.k.a. nuclear balls) -- which are neither polyhedral nor strongly convex \citep{so1990facial} -- have also received a lot of attention \citep{freund2017extended,allen2017linear,garber2018fast} and are especially useful in matrix completion \citep{jaggi2010simple,shalev2011large,harchaoui2012large,dudik2012lifted}.

\paragraph{Contributions.} 
We show accelerated sublinear convergence rates for the Frank-Wolfe algorithm, with appropriate line-search, for smooth constrained optimization problems when the constraint set is globally or locally uniformly convex. 
These bounds generalize the rates of \citep{polyak1966existence,demyanov1970}, \citep{dunn1979rates}, and \citep{garber2015faster} in their respective settings and fill the gap between all known convergence rates, \textit{i.e.} between $\mathcal{O}(1/T)$ and the linear rate of \citep{levitin1966constrained,demyanov1970,dunn1979rates}, and between $\mathcal{O}(1/T)$ and the $\mathcal{O}(1/T^2)$ rate of \citep{garber2015faster} (see \textit{e.g.} concluding remarks of \citep{garber2015faster}). 
We also provide similar arguments that interpolate between known regret bounds in an example of projection-free online learning. 
Overall, we illustrate another key aspect of the Frank-Wolfe algorithms: they are adaptive to many generic structural assumptions.

\paragraph{Outline.}
In Section~\ref{sec:analysis}, we analyze the complexity of the Frank-Wolfe algorithm when the constraint set is uniformly convex, under various assumptions on $f$.
In Section \ref{ssec:local_convergence_rates}, we also establish accelerated convergence rate under weaker assumptions than global or local uniform convexity of the constraint set.
In Section~\ref{sec:online_optimization}, we focus on the online optimization setting and provide analogous results to the previous section in term of regret bounds. 
In Section~\ref{sec:example_uniform_convexity}, we give some examples of uniformly convex sets and relate the uniform convexity notion for sets with that of spaces and functions. 

\paragraph{Notation.}
We use $d$ for the \emph{ambient dimension} of the compact convex sets $\mathcal{C}$. 
We denote the \emph{boundary} of $\mathcal{C}$ by $\partial\mathcal{C}$ and let 
$N_{\mathcal{C}}(x)\triangleq \{d~|~\langle d; y - x\rangle \leq 0,~\forall y\in\mathcal{C}\}$ denote the \emph{normal cone} at $x$ with respect to $\mathcal{C}$. In the following, $x^*$ is an (optimal) solution to \eqref{eq:opt_problem} and $(\alpha, q)$ denotes the uniform convexity parameters of a set. $p$ stands for the parameters for the various norm balls and might differ from $q$. We sometimes assume strict convexity of $f$ for the sake of exposition (only).
Given a norm $||\cdot||$ we denote by $||d||_* \triangleq \text{max}_{||x||\leq 1} \langle x; d \rangle$ its dual norm and we let $h_t \triangleq f(x_t) - f(x^*)$ denote the primal gap.

\section{Frank-Wolfe Convergence Analysis with Uniformly Convex Constraints}\label{sec:analysis}
In Theorem \ref{th:uniformly_cvx_set_rates_general}, we show accelerated convergence rate of the Frank-Wolfe algorithm when the constraint set $\mathcal{C}$ is $(\alpha, q)$-uniformly convex (with $q \geq 2$) and the smooth convex function satisfies $\text{inf}_{x\in\mathcal{C}}||\nabla f(x)||> 0$; this is the interesting case.
In Section~\ref{ssec:local_convergence_rates}, we then explore \emph{localized} uniform convexity on the set $\mathcal{C}$ and provide convergence rates in Theorem~\ref{th:uniformly_cvx_set_rates_general_local}. In Theorem \ref{th:rates_uniform_convexity} we show that $(\alpha, q)$-uniform convexity ensures convergence rates of the Frank-Wolfe algorithms in between the $\mathcal{O}(1/T)$ and $\mathcal{O}(1/T^2)$ \citep{garber2015faster} when the function is strongly convex (and $L$-smooth), or satisfies a quadratic error bound at $x^*$. We also provide generalized convergence rates assuming H\"olderian Error Bounds on $f$. 
In all of these scenarios, when the set is uniformly convex, the Frank-Wolfe algorithm (with short step) enjoys accelerated convergence rates with respect to $\mathcal{O}(1/T)$.

\paragraph{Proof Sketch.}
We now provide an informal discussion as to why the uniform convexity of $\mathcal{C}$ leads to accelerated convergence rates under the classical assumptions that $\text{inf}_{x\in\mathcal{C}} ||\nabla f(x)|| > 0$ and hence $x^*\in\partial\mathcal{C}$. Formal arguments are developed in the proof of Theorem~\ref{th:uniformly_cvx_set_rates_general}.
The key point is that if $\mathcal{C}$ is curved around $x^*$ and $f$ is $L$-smooth, when $||x_t - x^*||$ converges to zero, the quantity $||x_t - v_t||$ also converges to zero, which is generally not the case, for instance when the constraint set is a polytope.

In Figure \ref{fig:5_uniform_convexity_geometric_explanation} we show various such behaviors. Applying the $L$-smoothness of $f$ to the Frank-Wolfe iterates, the classical iteration inequality is of the form (with $\gamma\in[0,1]$)
\BEQ\label{eq:L_smoothness_trade_off}
f(x_{t+1}) - f(x^*) \leq f(x_t) - f(x^*) - \gamma \langle - \nabla f(x_t); v_t - x_t\rangle + \frac{\gamma^2}{2} L ||x_t - v_t||^2.
\EEQ
The non-negative quantity $\langle - \nabla f(x_t); v_t - x_t\rangle$ participates in guaranteeing the function decrease, counter-balanced with $||x_t - v_t||^2$.
The convergence rate then depends on specific relative quantification of these various terms, that we call scaling inequalities in Lemma~\ref{lemma:global_scaling_UC} and \ref{lem:simple_local_UC_imply_local_scaling}.

\begin{figure}[h!]
    \centering
    \includegraphics[width=0.80\linewidth]{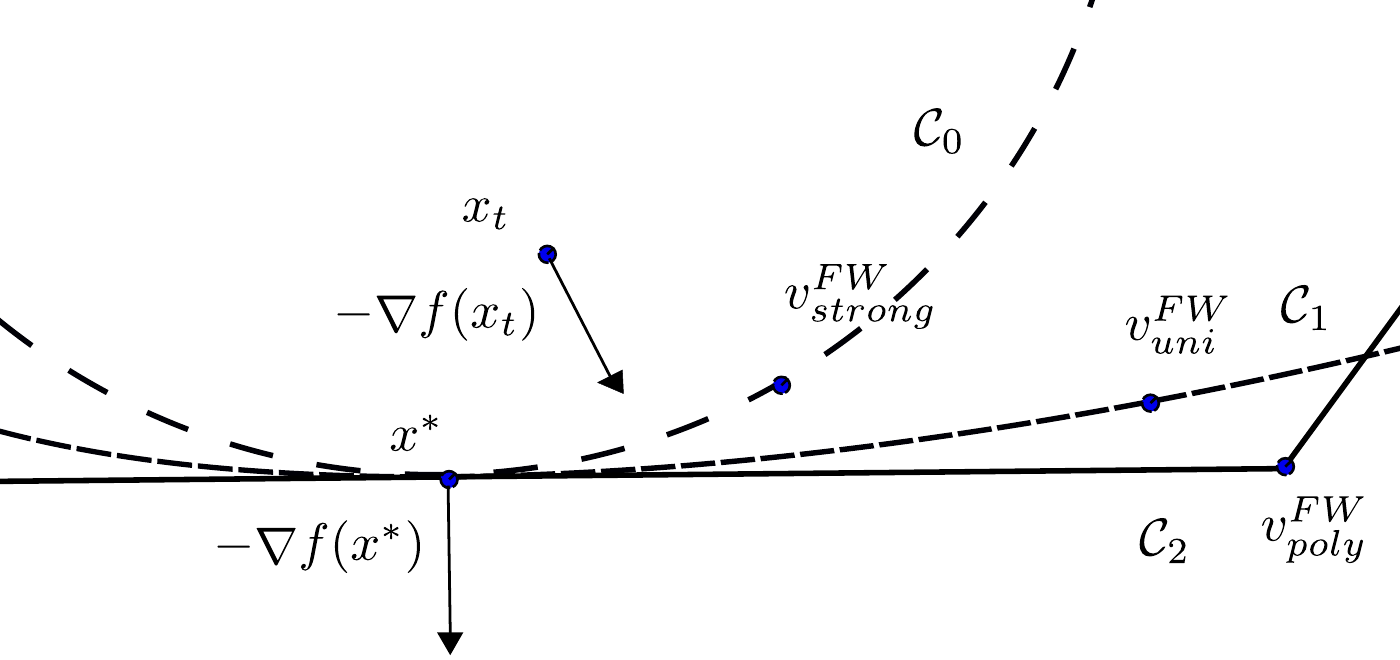}
    \caption{$v^{FW}_{strong}$, $v^{FW}_{uni}$, $v^{FW}_{poly}$ represent the various FW vertices from the strongly convex set $\mathcal{C}_0$, the uniformly convex set $\mathcal{C}_1$ and the polytope $\mathcal{C}_2$.}\label{fig:5_uniform_convexity_geometric_explanation}
\end{figure}

\subsection{Scaling Inequality on Uniformly Convex Sets}\label{ssec:scaling_inequalities}
The following lemma outlines that the uniform convexity of $\mathcal{C}$ implies an upper bound on the distance between the current iterate and the Frank-Wolfe vertex as a power of the Frank-Wolfe gap.
Note that the uniform convexity is defined with respect to any norm, and not just in terms of an Hilbertian structure. 
To be even more generic, the uniform convexity can be defined with respect to gauge functions that are not necessarily norms, see, for instance, the strong-convexity of \citep{Molinaro2020}.

\begin{lemma}\label{lemma:global_scaling_UC}
Assume the compact $\mathcal{C}\subset\mathbb{R}^d$ is an $(\alpha, q)$-uniformly convex set with respect to a norm $||\cdot||$, with $\alpha > 0$ and $q\geq 2$. Consider $x\in\mathcal{C}$, $\phi\in\mathbb{R}^d$ and $v_{\phi}\in\argmax_{v\in\mathcal{C}}{\langle \phi; v \rangle}$. Then, we have $\langle \phi; v_{\phi} - x\rangle \geq \frac{\alpha}{2} ||v_{\phi} - x||^q||\phi||_*$.
In particular for an iterate $x_t$ and its associated Frank-Wolfe vertex $v_t$, this yields
\BEQ\label{eq:scaling_UC}\tag{Global-Scaling}
\langle -\nabla f(x_t); v_t - x_t\rangle \geq \frac{\alpha}{2} ||v_t - x_t||^q||\nabla f(x_t)||_*.
\EEQ
\end{lemma}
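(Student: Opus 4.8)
The plan is to extract the scaling inequality as a single, direct application of the uniform-convexity Definition~\ref{def:set_uniform_convexity}, exploiting the fact that $v_\phi$ is a linear maximizer of $\langle \phi; \cdot\rangle$ over $\mathcal{C}$. Concretely, I would instantiate the definition at the pair of points $x\in\mathcal{C}$ and $y = v_\phi\in\mathcal{C}$, with the interpolation parameter $\eta=\tfrac12$, and with the direction $z$ chosen to be a unit vector realizing the dual norm of $\phi$, i.e. $z$ with $\|z\|=1$ and $\langle\phi;z\rangle = \|\phi\|_*$ (such $z$ exists by definition of $\|\cdot\|_*$ and compactness of the unit sphere). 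By uniform convexity, the bumped midpoint
\[
w \;=\; \tfrac12 x + \tfrac12 v_\phi + \tfrac14\,\gamma_{\mathcal{C}}\!\big(\|x-v_\phi\|\big)\,z
\]
lies in $\mathcal{C}$.

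The core step is then to play $w$ against the optimality of $v_\phi$. Since $v_\phi\in\argmax_{v\in\mathcal{C}}\langle\phi;v\rangle$ and $w\in\mathcal{C}$, we have $\langle\phi;w\rangle \le \langle\phi;v_\phi\rangle$. Expanding the left-hand side by linearity and cancelling the $\tfrac12\langle\phi;v_\phi\rangle$ terms yields
\[
\tfrac14\,\gamma_{\mathcal{C}}\!\big(\|x-v_\phi\|\big)\,\langle\phi;z\rangle \;\le\; \tfrac12\,\langle\phi; v_\phi - x\rangle .
\]
Substituting $\langle\phi;z\rangle = \|\phi\|_*$ and rearranging gives $\tfrac12\,\gamma_{\mathcal{C}}\!\big(\|x-v_\phi\|\big)\,\|\phi\|_* \le \langle\phi; v_\phi - x\rangle$. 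Finally, invoking the $(\alpha,q)$ lower bound $\gamma_{\mathcal{C}}(r)\ge \alpha r^{q}$ produces exactly $\langle\phi; v_\phi - x\rangle \ge \tfrac{\alpha}{2}\,\|v_\phi - x\|^{q}\,\|\phi\|_*$, which is the claimed inequality. The specialization to the Frank-Wolfe setting is then immediate: taking $\phi = -\nabla f(x_t)$, $x = x_t$, and $v_\phi = v_t$ (the LMO output on line~\ref{line:FW_LMO}) turns the general bound into \eqref{eq:scaling_UC}.

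There is no serious analytic obstacle here; the argument is a one-shot use of the definition, so the work is in choosing the configuration correctly rather than in any estimation. The two points that require care are (i) fixing $y = v_\phi$ so that optimality of $v_\phi$ collapses precisely the two terms that must cancel, and (ii) selecting $z$ to be the dual-norm-attaining direction, which is what converts $\langle\phi;z\rangle$ into the factor $\|\phi\|_*$ and is essential since the definition allows an \emph{arbitrary} unit $z$. I would also note, as a sanity check on the constant, that the choice $\eta=\tfrac12$ is what yields the factor $\tfrac{\alpha}{2}$; letting $\eta\to 0^{+}$ in the same computation (after dividing through by $\eta$) would in fact sharpen the coefficient to $\alpha$, so the stated $\tfrac{\alpha}{2}$ is a deliberately simple, non-optimal form that nonetheless suffices for all subsequent convergence estimates.
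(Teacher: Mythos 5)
Your proof is correct and takes essentially the same route as the paper's: the paper likewise instantiates the definition at the midpoint ($\eta = \tfrac12$), bumps it by $\tfrac{\alpha}{4}\|x - v_{\phi}\|^q z$, plays the resulting point against the optimality of $v_{\phi}$, and then ``chooses the best $z$'', which is precisely your dual-norm-attaining direction. Your side remark that working with general $\eta$ and letting $\eta \to 0^{+}$ would sharpen the constant from $\tfrac{\alpha}{2}$ to $\alpha$ is also valid, though neither your main argument nor the paper uses it.
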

\begin{proof}
Because $\mathcal{C}$ is $(\alpha, q)$-uniformly convex, we have that for any $z\in\mathbb{R}^d$ of unit norm $(x + v_{\phi})/2 + \alpha/4 ||x - v_{\phi}||^q z \in\mathcal{C}$.
By optimality of $v_{\phi}$, we have
$\langle \phi; v_{\phi}\rangle  \geq  \langle \phi; (x + v_{\phi})/2\rangle + \alpha/4 ||x - v_{\phi}||^q \langle \phi, z\rangle$. 
Hence, choosing the best $z$ implies $\langle \phi; v_{\phi} - x\rangle \geq \alpha/2 ||v_{\phi} - x||^q ||\phi||_*$.
\end{proof}

In other words, when $\mathcal{C}$ is uniformly convex, \eqref{eq:scaling_UC} quantifies the trade-off between the Frank-Wolfe gap $g(x_t) \triangleq \langle \nabla f(x_t); x_t - v_t\rangle$ and the value of $||x_t - v_t||$ under consideration in \eqref{eq:L_smoothness_trade_off}.

\subsection{Interpolating linear and sublinear rates}\label{ssec:sublinear_linear}

To our knowledge, no accelerated convergence rate of the Frank-Wolfe algorithm is known when the constraint set is uniformly convex but not strongly convex. We fill this gap in Theorem~\ref{th:uniformly_cvx_set_rates_general} below. When $q$ goes to $+\infty$, we recover the classic sublinear convergence rate of $\mathcal{O}(1/T)$.

\begin{theorem}\label{th:uniformly_cvx_set_rates_general}
Consider a convex $L$-smooth function $f$ and a compact convex set $\mathcal{C}$. Assume that $\mathcal{C}$ is $(\alpha, q)$-uniformly convex set with respect to a norm $||\cdot||$, with $q \geq 2$. 
Assume $||\nabla f(x)||_* \geq c > 0 $ for all $x\in\mathcal{C}$.
Then the iterates of the Frank-Wolfe algorithm, with short step as in Line~\ref{line:step_size} of Algorithm \ref{algo:FW_general} or exact line search, satisfy
\begin{equation}\label{eq:global_rate_demyanov}
  \left\{
    \begin{split}
    f(x_T) - f(x^*) &\leq M/(T + k)^{1/(1-2/q)}~~\text{ when }  q > 2 \\
    f(x_T) - f(x^*) &\leq \big( 1 - \rho \big)^T h_0 ~~\text{ when } q=2,
    \end{split}
  \right.
\end{equation}
with $\rho = \text{max}\big\{ \frac{1}{2}, 1 - c\alpha/L \big\}$, $k \triangleq (2 - 2^{\eta})/({2^\eta -1})$ and $M \triangleq \text{max}\{h_0 k^{1/\eta}, 2/((\eta-(1-\eta)(2^\eta -1))C)^{1/\eta}\}$, where $\eta \triangleq 1 - 2/q$ and $C \triangleq (c\alpha/2)^{2/q}/(2L)$.
\end{theorem}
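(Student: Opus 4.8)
The plan is to turn one Frank--Wolfe step into a recursion on the primal gap $h_t$ alone, by combining the smoothness descent inequality \eqref{eq:L_smoothness_trade_off} with the scaling inequality \eqref{eq:scaling_UC}. First I would record two elementary facts. By convexity of $f$ and the optimality of $v_t$ (which minimizes $\langle\nabla f(x_t);\cdot\rangle$ over $\mathcal{C}$, hence beats $x^*$), the Frank--Wolfe gap dominates the primal gap: $g_t:=\langle-\nabla f(x_t);v_t-x_t\rangle\ge h_t$. Also, since exact line search minimizes $f$ over $[x_t,v_t]$ and the short step is one feasible choice of $\gamma$, exact line search decreases $f$ at least as much; so it suffices to prove the bound for the short step. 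Writing $d_t:=\|v_t-x_t\|$ and carrying out the one-dimensional minimization of Line~\ref{line:step_size}, I would split into the two standard regimes: the interior optimum $\gamma_t=g_t/(Ld_t^2)\le 1$, giving $h_{t+1}\le h_t-g_t^2/(2Ld_t^2)$, and the clipped optimum $\gamma_t=1$ (occurring when $g_t>Ld_t^2$), giving $h_{t+1}\le h_t-g_t+\tfrac12 Ld_t^2\le h_t-\tfrac12 g_t$.

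Next I would eliminate $d_t$. The scaling inequality \eqref{eq:scaling_UC} with $\|\nabla f(x_t)\|_*\ge c$ gives $d_t^{\,q}\le 2g_t/(c\alpha)$, hence $d_t^{2}\le(2g_t/(c\alpha))^{2/q}$. Substituting into the interior regime yields $h_{t+1}\le h_t-C\,g_t^{\,2-2/q}\le h_t-C\,h_t^{1+\eta}$, with $\eta=1-2/q$ and $C$ exactly as in the statement, the last step using $g_t\ge h_t$ and $1+\eta\ge1$. Since the clipped regime gives $h_{t+1}\le\tfrac12 h_t$ and using $g_t \ge h_t$, in both cases
\[
h_{t+1}\ \le\ h_t-\min\bigl\{\,C\,h_t^{1+\eta},\ \tfrac12 h_t\,\bigr\}.
\]
This single recursion already displays the interpolation: while $h_t$ is large the minimum equals $\tfrac12 h_t$ (a geometric phase), and once $h_t$ is small it equals $C h_t^{1+\eta}$ (the polynomial phase).

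For $q=2$ we have $\eta=0$ and $C=c\alpha/(4L)$, so both branches contract $h_t$ by a fixed factor: the clipped branch by $\tfrac12$, the interior branch by $1-\tfrac{c\alpha}{4L}$. Taking the worse of the two factors gives $h_{t+1}\le(1-\rho)h_t$ for a constant $\rho=\Theta(c\alpha/L)$ capped at $\tfrac12$, of the form recorded in the statement, and iterating yields the linear rate $h_T\le(1-\rho)^T h_0$; the only work here is bookkeeping the precise value of $\rho$.

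For $q>2$ the heart of the argument is solving $h_{t+1}\le h_t-C h_t^{1+\eta}$ with $\eta\in(0,1)$. The clean device is the substitution $w_t:=h_t^{-\eta}$: applying $(1-x)^{-\eta}\ge1+\eta x$ at $x=Ch_t^{\eta}\in[0,1)$ turns the nonlinear recursion into the additive one $w_{t+1}\ge w_t+\eta C$, which already gives the $\mathcal O(1/T^{1/\eta})$ rate. To obtain the stated closed form $M/(T+k)^{1/\eta}$ valid for \emph{all} $T$, I would instead run an induction with ansatz $h_t\le b_t:=M/(t+k)^{1/\eta}$: the base case $t=0$ forces $M\ge h_0 k^{1/\eta}$ (the first term of $M$); the polynomial inductive step, after reducing the required inequality to $1-CM^{\eta}/(t+k)\le(1+1/(t+k))^{-1/\eta}$ and enforcing it uniformly in $t$, forces the second term of $M$; and the geometric branch is compatible with the envelope exactly when $\tfrac12 b_t\le b_{t+1}$, which holds precisely under $k+1=1/(2^\eta-1)$, i.e. the stated $k=(2-2^\eta)/(2^\eta-1)$. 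The main obstacle is this last reconciliation: making one envelope hold uniformly across the transition between the geometric and polynomial phases and for small $T$, where $t+k$ is smallest and every inequality is tight; this is where the non-additive lower-order error in the $w_t$-recursion must be controlled carefully and where the exact constants $M$ and $k$ originate, whereas the descent-plus-scaling reduction and the $q=2$ argument are routine.
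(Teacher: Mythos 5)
Your proposal is correct and follows essentially the same route as the paper: the short-step descent inequality combined with the global scaling inequality (Lemma~\ref{lemma:global_scaling_UC}) and $g(x_t)\geq h_t$ yields exactly the paper's recursion $h_{t+1}\leq h_t\cdot\max\{1/2,\,1-C h_t^{\eta}\}$ with the same $C$ and $\eta$, and your $q=2$ contraction factor matches what this recursion gives. The only difference is that the paper then solves the recursion by invoking Lemma~\ref{lem:general_recursive_equation} (quoted from \citep{YiAdapativeFW}), whereas you sketch a self-contained proof of that lemma via the substitution $w_t=h_t^{-\eta}$ and an induction on the envelope $M/(t+k)^{1/\eta}$ --- which is precisely the content of the cited result, so the two arguments coincide.
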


\begin{proof}
By $L$-smoothness of $f$ and because of the short step, we have for $\gamma\in[0,1]$
\[
f(x_{t+1}) \leq f(x_t) - \gamma g(x_t) + \frac{\gamma^2}{2} L ||x_t - v_t||^2,
\]
where $g(x_t)$ is the Frank-Wolfe gap. With $\gamma = \text{min}\big\{ 1, g(x_t)/(L ||x_t - v_t||^2)\big\}$ we have
\[
f(x_{t+1}) \leq f(x_t) - \frac{g(x_t)}{2} \cdot \text{min}\Big\{ 1; \frac{g(x_t)}{L ||x_t - v_t||^2}\Big\}.
\]
Applying Lemma~\ref{lemma:global_scaling_UC} with $\phi=-\nabla f(x_t)$ gives $g(x_t) \geq \alpha/2 ||x_t-v_t||^q ||\nabla f(x_t)||_*$. Then
\BEQ\label{eq:central_equation}
\frac{g(x_t)}{||x_t - v_t||^2} = \Big( \frac{g(x_t)^{q/2-1} g(x_t)}{||x_t - v_t||^q}\Big)^{2/q} \geq \Big(\alpha/2 ||\nabla f(x_t)||_*\Big)^{2/q} g(x_t)^{1 - 2/q}.
\EEQ
Finally, because $g(x_t)\geq f(x_t) - f(x^*) = h(x_t)$, we have
\[
h(x_{t+1}) \leq h(x_t) - \frac{h(x_t)}{2} \text{min}\Big\{ 1; \Big(\alpha/2 ||\nabla f(x_t)||_*\Big)^{2/q} h(x_t)^{1-2/q} /L \Big\},
\]
and hence
\BEQ\label{eq:principal_result}
h(x_{t+1}) \leq h(x_t) \cdot \text{max}\Big\{ \frac{1}{2}; 1 - \Big(\alpha/2 ||\nabla f(x_t)||_*\Big)^{2/q} h(x_t)^{1 - 2/q} /(2 L) \Big\}.
\EEQ
Then, by assumption, for all $x\in\mathcal{C}$, we have $||\nabla f(x)||_* > c > 0$ and hence \eqref{eq:principal_result} becomes
\[
h(x_{t+1}) \leq h(x_t) \cdot \text{max}\Big\{ \frac{1}{2}; 1 - \big(c\alpha/2\big)^{2/q} h(x_t)^{1 - 2/q} /(2 L) \Big\}.
\]
We solve the recursion with Lemma~\ref{lem:general_recursive_equation}; when $q=2$ we recover the linear convergence rate.
\end{proof}

\begin{remark}
The convergence rates in Theorem~\ref{th:uniformly_cvx_set_rates_general} imply convergence rates in term of distance to optimum by applying Lemma~\ref{lemma:global_scaling_UC} with $\phi=-\nabla f(x^*)$ and convexity of $f$. Indeed, this yields
\[
||x_t - x^*||^q \leq \frac{2}{c\alpha} \langle -\nabla f(x^*) ; x^* - x_t \rangle \leq \frac{2}{c\alpha} \big( f(x_t) - f(x^*) \big).
\]
Hence, to obtain convergence rates in terms of the distance of the iterates to the optimum, the uniform convexity of the set supersedes that of the function, which is not needed here. 
\end{remark}

\subsection{Convergence Rates with Local Uniform Convexity}\label{ssec:local_convergence_rates}
Theorem~\ref{th:uniformly_cvx_set_rates_general} relies on the global uniform convexity of the set. Actually, for the strongly convex case, it is equivalent to the global scaling inequality~\eqref{eq:scaling_UC}, see \citep[Theorem~2.1 (g)]{goncharov2017strong}.
However, weaker assumptions also lead to accelerated convergence rates of the Frank-Wolfe algorithm. In Theorem~\ref{th:uniformly_cvx_set_rates_general_local}, we show accelerated convergence rates assuming a \emph{local} scaling inequality at $x^*$.
We then study the sets for which such an inequality holds. 
We say that a local scaling inequality holds at $x^*\in\mathcal{C}$, when there exists an $\alpha>0$ and $q\geq 2$ such that for all $x\in\mathcal{C}$
\BEQ\label{eq:local_scaling_inequality}\tag{Local-Scaling}
\langle -\nabla f(x^*); x^* - x\rangle \geq \alpha/2 ||\nabla f(x^*)||_* \cdot ||x^* - x||^q.
\EEQ
This combines the position of $-\nabla f(x^*)$ with respect to the normal cone of $\mathcal{C}$ at $x^*$ and the local geometry of $\mathcal{C}$ at $x^*$, see Remark~\ref{cor:more_than_UC}.
When the set $\mathcal{C}$ is globally $(\alpha, q)$-uniformly convex, this is a direct consequence of Lemma~\ref{lemma:global_scaling_UC} because $-\nabla f(x^*)\in N_{\mathcal{C}}(x^*)$.
In the following lemma, we prove that it is also a consequence of a natural definition of local uniform convexity of $\mathcal{C}$ at $x^*$.
A proof is given in Appendix~\ref{ssec:proof_scaling_inequalities}.

\begin{lemma}\label{lem:simple_local_UC_imply_local_scaling}
Consider a compact convex set $\mathcal{C}$ and $x^*$ a solution to \eqref{eq:opt_problem}.
Assume that $\mathcal{C}$ is locally $(\alpha, q)$-uniformly convex at $x^*$ with respect to $||\cdot||$ in the sense that, for all $x\in\mathcal{C}$, $\eta\in[0,1]$ and unit norm $z\in\mathbb{R}^d$, we have $\eta x^* + (1-\eta)x + \eta (1 - \eta ) \alpha ||x^* - x||^q z \in \mathcal{C}$. 
Then \eqref{eq:local_scaling_inequality} holds at $x^*$ with parameters $(\alpha, q)$.
\end{lemma}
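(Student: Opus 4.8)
The plan is to mirror the argument for the global scaling inequality in Lemma~\ref{lemma:global_scaling_UC}, with one crucial substitution. Whereas that lemma applied uniform convexity at the midpoint of an arbitrary pair $(x, v_\phi)$ and invoked the optimality of the linear-maximization vertex $v_\phi$ for the direction $\phi$, here I would invoke the first-order optimality of $x^*$ for \eqref{eq:opt_problem}. Since $x^*$ solves $\min_{x\in\mathcal{C}} f(x)$, we have $\langle \nabla f(x^*); y - x^*\rangle \geq 0$ for all $y\in\mathcal{C}$, that is, $-\nabla f(x^*)\in N_{\mathcal{C}}(x^*)$. Equivalently, $x^*\in\argmax_{v\in\mathcal{C}} \langle -\nabla f(x^*); v\rangle$, so $x^*$ plays exactly the role that $v_\phi$ played in Lemma~\ref{lemma:global_scaling_UC} for the choice $\phi = -\nabla f(x^*)$. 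This observation is the conceptual heart of the statement.

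Concretely, fix an arbitrary $x\in\mathcal{C}$ and a unit-norm $z\in\mathbb{R}^d$, and apply the local uniform convexity hypothesis with $\eta = 1/2$: the point $w \triangleq (x^* + x)/2 + (\alpha/4)||x^* - x||^q z$ lies in $\mathcal{C}$. Applying $-\nabla f(x^*)\in N_{\mathcal{C}}(x^*)$ to $w\in\mathcal{C}$ gives $\langle -\nabla f(x^*); w - x^*\rangle \leq 0$, i.e. $\langle -\nabla f(x^*); w\rangle \leq \langle -\nabla f(x^*); x^*\rangle$. Expanding $w$ and using $\langle -\nabla f(x^*); x^* - (x^* + x)/2\rangle = \tfrac12\langle -\nabla f(x^*); x^* - x\rangle$ yields
\[
\frac{\alpha}{4}||x^* - x||^q \langle -\nabla f(x^*); z\rangle \leq \frac{1}{2}\langle -\nabla f(x^*); x^* - x\rangle.
\]
Finally I would take the supremum over unit-norm $z$ on the left-hand side: since $\max_{||z|| = 1}\langle -\nabla f(x^*); z\rangle = ||\nabla f(x^*)||_*$ by definition of the dual norm (the supremum being attained because the unit sphere is compact in finite dimension) and the prefactor $\tfrac{\alpha}{4}||x^*-x||^q$ is non-negative, multiplying through by $2$ produces exactly \eqref{eq:local_scaling_inequality} with parameters $(\alpha, q)$.

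The step I expect to require the most care is not computational but the bookkeeping that optimality of $x^*$ furnishes the required maximization property against the perturbed midpoint $w$. One must keep the sign conventions in the normal-cone inequality aligned so that the curvature term $\tfrac{\alpha}{4}||x^*-x||^q\langle-\nabla f(x^*);z\rangle$ lands on the correct side and survives with a positive sign after optimizing over $z$; the definition of $N_{\mathcal{C}}$ given in the notation makes this direction unambiguous, but it is the one place where an error would invert the inequality. Everything else is a transcription of the $\eta = 1/2$ specialization already carried out in Lemma~\ref{lemma:global_scaling_UC}. I note in passing that the argument never uses $q\geq 2$, so the scaling inequality holds for any $q>0$ for which the local hypothesis is meaningful; the restriction $q\geq 2$ is imposed only for compatibility with the convergence analysis.
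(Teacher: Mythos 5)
Your proposal is correct and follows the paper's own proof essentially verbatim: apply the local uniform convexity hypothesis at $\eta = 1/2$ to obtain $(x^*+x)/2 + (\alpha/4)\|x^*-x\|^q z \in \mathcal{C}$, invoke the optimality of $x^*$ (i.e.\ $x^*\in\operatorname{argmax}_{v\in\mathcal{C}}\langle -\nabla f(x^*); v\rangle$), and optimize over unit-norm $z$ to produce the dual norm. Your added remarks --- spelling out that $-\nabla f(x^*)\in N_{\mathcal{C}}(x^*)$ follows from first-order optimality, and observing that $q\geq 2$ is never used --- are accurate but do not change the argument.
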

\begin{proof}
By definition of local uniform convexity between $x^*$ and $x$, we have that for any $z\in\mathbb{R}^d$ of unit norm $(x^* + x)/2 + \alpha/4 ||x^* - x||^q z \in\mathcal{C}$. Then, by optimality of $x^*$, \textit{i.e.} $x^*\in\text{argmax}_{v\in\mathcal{C}} \langle -\nabla f(x^*); v\rangle$, we have
$\langle -\nabla f(x^*); x^*\rangle  \geq  \langle -\nabla f(x^*); (x^* + x)/2\rangle + \alpha/4 ||x^* - x||^q \langle -\nabla f(x^*), z\rangle$. Choosing the best $z$ and subtracting both sides by $\langle -\nabla f(x^*); x\rangle$, implies
\[
\langle -\nabla f(x^*); x^* - x\rangle \geq \alpha/2 ||x^* - x||^q ||\nabla f(x^*)||_*.
\]
\end{proof}

We obtain sublinear convergence rates that are systematically better than the $\mathcal{O}(1/T)$ baseline for any $q\geq 2$.

\begin{theorem}\label{th:uniformly_cvx_set_rates_general_local}
Consider $f$ an $L$-smooth convex function and a compact convex set $\mathcal{C}$. Assume $||\nabla f(x)||_* > c > 0$ for all $x\in\mathcal{C}$ and write $x^*\in\partial\mathcal{C}$ a solution of \eqref{eq:opt_problem}. Further, assume that the convex set $\mathcal{C}$ satisfies a local scaling inequality at $x^*$ with parameters $(\alpha, q)$.
Then the iterates of the Frank-Wolfe algorithm, with short step satisfy
\begin{equation}
  \left\{
    \begin{split}
    f(x_T) - f(x^*) &\leq M/(T + k)^{\frac{1}{1-2/(q(q-1))}}~~\text{ when }  q > 2 \\
    f(x_T) - f(x^*) &\leq \big( 1 - \rho \big)^T h_0 ~~\text{ when } q=2,
    \end{split}
  \right.
\end{equation}
with $\rho = \text{max}\big\{ \frac{1}{2}, 1 - c\alpha/L \big\}$, $k \triangleq (2 - 2^{\eta})/({2^\eta -1})$ and $M \triangleq \text{max}\{h_0 k^{1/\eta}, 2/((\eta-(1-\eta)(2^\eta -1))C)^{1/\eta}\}$, where $\eta \triangleq 1 - 2/(q(q-1))$ and $C \triangleq 1/(2LH^2)$. Note that $H$ depends only on $C, \alpha,  L$ and $q$ (see Lemma \ref{lemma:local_scaling_inequality_consequence}).
\end{theorem}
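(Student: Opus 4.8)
The plan is to reduce the analysis to the \emph{exact same} recursion solved in Theorem~\ref{th:uniformly_cvx_set_rates_general}, the only difference being the value of the exponent $\eta$. Recall that the global proof used $(\alpha,q)$-uniform convexity solely through a lower bound on $g(x_t)/\|x_t-v_t\|^2$, i.e. through an upper bound on $\|x_t-v_t\|$ expressed via the Frank--Wolfe gap. Here the scaling inequality \eqref{eq:local_scaling_inequality} is only available at $x^*$, so the first and crucial task is to produce an analogous bound of the form $\|x_t-v_t\|\le H\, h(x_t)^{1/(q(q-1))}$; this is the content of Lemma~\ref{lemma:local_scaling_inequality_consequence}. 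Once it is in hand, substituting it into the $L$-smoothness/short-step inequality yields the recursion $h(x_{t+1})\le h(x_t)\max\{1/2,\,1-C\,h(x_t)^{\eta}\}$ with $\eta=1-2/(q(q-1))$ and $C=1/(2LH^2)$, which is then solved verbatim by Lemma~\ref{lem:general_recursive_equation}.

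To obtain the bound on $\|x_t-v_t\|$ I would control $\|x_t-x^*\|$ and $\|x^*-v_t\|$ separately and combine them with the triangle inequality. For the first, convexity of $f$ gives $h(x_t)\ge \langle -\nabla f(x^*);\, x^*-x_t\rangle$, which together with \eqref{eq:local_scaling_inequality} applied at $x=x_t$ and $\|\nabla f(x^*)\|_*\ge c$ yields $h(x_t)\ge \tfrac{c\alpha}{2}\|x_t-x^*\|^q$, hence $\|x_t-x^*\|\le (2h(x_t)/(c\alpha))^{1/q}$. For the second, I would apply \eqref{eq:local_scaling_inequality} at $x=v_t$ and split $\langle -\nabla f(x^*);\, x^*-v_t\rangle = \langle -\nabla f(x_t);\, x^*-v_t\rangle + \langle \nabla f(x_t)-\nabla f(x^*);\, x^*-v_t\rangle$. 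The first summand is nonpositive because $v_t$ maximizes $\langle -\nabla f(x_t);\cdot\rangle$ over $\mathcal{C}$, while the second is at most $L\|x_t-x^*\|\,\|x^*-v_t\|$ by Lipschitz continuity of $\nabla f$. This gives $\tfrac{c\alpha}{2}\|x^*-v_t\|^{q-1}\le L\|x_t-x^*\|$, i.e. $\|x^*-v_t\|\le (2L/(c\alpha))^{1/(q-1)}\|x_t-x^*\|^{1/(q-1)}$. Chaining with the first bound produces $\|x^*-v_t\|\lesssim h(x_t)^{1/(q(q-1))}$; since $1/(q(q-1))\le 1/q$ for $q\ge 2$, the term $\|x^*-v_t\|$ dominates $\|x_t-x^*\|$ as $h(x_t)\to 0$ (both being at most a constant times $h(x_t)^{1/(q(q-1))}$ on the bounded range $h(x_t)\le h_0$), so $\|x_t-v_t\|\le H\,h(x_t)^{1/(q(q-1))}$ with $H$ explicit and depending only on $c,\alpha,L,q$ (and $h_0$).

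With this bound, the remainder mirrors Theorem~\ref{th:uniformly_cvx_set_rates_general}. Using $g(x_t)\ge h(x_t)$ and $\|x_t-v_t\|^2\le H^2 h(x_t)^{2/(q(q-1))}$ gives $g(x_t)/(L\|x_t-v_t\|^2)\ge h(x_t)^{\eta}/(LH^2)$, so the short-step decrease becomes $h(x_{t+1})\le h(x_t)\max\{1/2,\,1-\tfrac{1}{2LH^2}h(x_t)^{\eta}\}$ with $\eta=1-2/(q(q-1))$ and $C=1/(2LH^2)$. Invoking Lemma~\ref{lem:general_recursive_equation} then gives the stated $\mathcal{O}(1/(T+k)^{1/\eta})$ rate for $q>2$; for $q=2$ one has $\eta=0$, the factor $h(x_t)^{\eta}$ is constant, and the recursion collapses to the linear rate $(1-\rho)^T h_0$.

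The main obstacle is the second step, i.e. controlling $\|x^*-v_t\|$. The scaling inequality is anchored at $x^*$ with the gradient $\nabla f(x^*)$, whereas $v_t$ is defined through the \emph{wrong} gradient $\nabla f(x_t)$; bridging the two via the optimality of $v_t$ and the $L$-Lipschitz continuity of $\nabla f$ is the crux. It is precisely this detour through smoothness that degrades the exponent from $1/q$ (global case) to $1/(q(q-1))$ (local case), explaining why the interpolation exponent becomes $1-2/(q(q-1))$ rather than $1-2/q$.
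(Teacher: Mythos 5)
Your proposal is correct and follows essentially the same route as the paper: your two-step bound on $\|x_t-v_t\|$ (local scaling at $x=x_t$ combined with convexity, plus local scaling at $x=v_t$ bridged through the optimality of $v_t$ and the $L$-Lipschitz continuity of $\nabla f$) is precisely the paper's Lemma~\ref{lemma:local_scaling_inequality_consequence}, and the resulting recursion with $\eta=1-2/(q(q-1))$ and $C=1/(2LH^2)$ is solved identically via Lemma~\ref{lem:general_recursive_equation}. The only cosmetic difference is that you absorb the burn-in into an $h_0$-dependent constant $H$, whereas the paper assumes $h_t\le 1$ after a burn-in phase so that $H$ depends only on $c,\alpha,L,q$.
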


\begin{remark}
When the local scaling inequality \eqref{eq:local_scaling_inequality} holds with $q=2$, we obtain the same linear convergence regime as in \eqref{eq:global_rate_demyanov}. 
With $q>2$, the sublinear convergence rates are of order $\mathcal{O}(1/T^{1/(1-2/(q(q-1)))})$ instead of $\mathcal{O}(1/T^{1/(1-2/q)})$ when the set is $(\alpha, q)$-uniformly convex and the global scaling inequality~\eqref{eq:scaling_UC} holds. It is an open question to close this gap in the convergence regime with the local scaling inequality only.
\end{remark}

The local scaling inequality expresses a property between $x^*$ and any $x\in\mathcal{C}$.
In the following lemma, we show that albeit we only have access to a local scaling inequality, it is still possible to control the variation of the distance of the iterate to its Frank-Wolfe vertex $||x_t - v_t||$ in terms of a power of the primal gap, see beginning of Section \ref{sec:analysis} for a qualitative explanation. This is key for the proof of Theorem \ref{th:uniformly_cvx_set_rates_general_local}.

\begin{lemma}\label{lemma:local_scaling_inequality_consequence}
Consider $f$ a $L$-smooth convex function and a compact convex set $\mathcal{C}$. Assume $\text{inf}_{x\in\mathcal{C}}||\nabla f(x)||_* > c > 0$ and write $x^*\in\partial\mathcal{C}$ the solution of \eqref{eq:opt_problem}. Assume that $\mathcal{C}$ satisfies a local scaling inequality at $x^*$ for problem~\eqref{eq:opt_problem} with $\alpha > 0$ and $q\geq 2$, \textit{i.e.} for all $x\in\mathcal{C}$
\BEQ\label{eq:local_scaling_lemma}
\langle -\nabla f(x^*); x^* - x\rangle \geq \alpha/2 ||\nabla f(x^*)||_* \cdot ||x^* - x||^q 
\EEQ
Write $v_t \triangleq \text{argmax}_{v\in\mathcal{C}} \langle -\nabla f(x_t); v\rangle$ the Frank-Wolfe vertex. Assume that $h_t=f(x_t) - f(x^*)\leq 1$ (a simple burn-in phase). Then, we have
\BEQ
||x_t - v_t|| \leq H h_t^{1/(q(q-1))},
\EEQ
with $H \triangleq 2 \cdot \text{max}\Big\{\Big(\frac{2L}{c \alpha}\Big)^{1/(q-1)} \Big(\frac{2}{c\alpha}\Big)^{1/(q(q-1))}, \Big(\frac{2}{c\alpha}\Big)^{1/q}\Big\}$.
\end{lemma}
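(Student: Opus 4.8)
The plan is to bound $\|x_t - v_t\|$ by the triangle inequality $\|x_t - v_t\| \leq \|x_t - x^*\| + \|x^* - v_t\|$ and to control each summand separately by a power of the primal gap $h_t$, using the local scaling inequality~\eqref{eq:local_scaling_lemma} as the central tool in both. The difficulty to keep in mind throughout is that the scaling inequality only ever references the \emph{fixed} gradient $\nabla f(x^*)$, whereas the Frank-Wolfe vertex $v_t$ is defined through the \emph{moving} gradient $\nabla f(x_t)$; reconciling these is the crux of the argument.

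First I would handle $\|x_t - x^*\|$, which is the easy term. Applying \eqref{eq:local_scaling_lemma} with $x = x_t$ and using $\|\nabla f(x^*)\|_* \geq c$ gives $\tfrac{c\alpha}{2}\|x^* - x_t\|^q \leq \langle -\nabla f(x^*); x^* - x_t\rangle$, while convexity of $f$ bounds the right-hand side by $f(x_t) - f(x^*) = h_t$. Rearranging yields $\|x_t - x^*\| \leq (2/(c\alpha))^{1/q}\, h_t^{1/q}$, which already exhibits the constant $(2/(c\alpha))^{1/q}$ appearing in $H$.

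The main obstacle is the second term $\|x^* - v_t\|$. Here I would again invoke \eqref{eq:local_scaling_lemma}, now with $x = v_t$, to get $\tfrac{c\alpha}{2}\|x^* - v_t\|^q \leq \langle -\nabla f(x^*); x^* - v_t\rangle$, and then upper bound the right-hand side without reference to $\nabla f(x^*)$. The key manipulation is the decomposition
\[
\langle -\nabla f(x^*); x^* - v_t\rangle = \langle -\nabla f(x_t); x^* - v_t\rangle + \langle \nabla f(x_t) - \nabla f(x^*); x^* - v_t\rangle,
\]
in which the first term is $\leq 0$ by optimality of $v_t$ (it maximizes $\langle -\nabla f(x_t); \cdot\rangle$ over $\mathcal{C}$, and $x^*\in\mathcal{C}$). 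The second term is bounded by $\|\nabla f(x_t)-\nabla f(x^*)\|_*\,\|x^*-v_t\| \leq L\|x_t-x^*\|\,\|x^*-v_t\|$ via the dual-norm inequality and $L$-smoothness. Combining and cancelling one factor of $\|x^*-v_t\|$ (the case $v_t=x^*$ being trivial) gives $\|x^*-v_t\|^{q-1} \leq \tfrac{2L}{c\alpha}\|x_t-x^*\|$, hence $\|x^*-v_t\| \leq (2L/(c\alpha))^{1/(q-1)}\|x_t-x^*\|^{1/(q-1)}$.

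To finish, I would substitute the bound on $\|x_t-x^*\|$ into this estimate, producing $\|x^*-v_t\| \leq (2L/(c\alpha))^{1/(q-1)}(2/(c\alpha))^{1/(q(q-1))}\, h_t^{1/(q(q-1))}$, which supplies the other constant in $H$. Since $q\geq 2$ gives $1/q \geq 1/(q(q-1))$ and the burn-in assumption $h_t\leq 1$ makes $h_t^{1/q}\leq h_t^{1/(q(q-1))}$, both summands carry the common factor $h_t^{1/(q(q-1))}$. Adding them and using $a+b \leq 2\max\{a,b\}$ collapses the two constants into the stated $H$, yielding $\|x_t-v_t\| \leq H\, h_t^{1/(q(q-1))}$. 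The only genuinely nontrivial step is the gradient-swap in the second term; everything else is bookkeeping of exponents and constants.
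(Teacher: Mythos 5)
Your proposal is correct and follows essentially the same route as the paper's proof: the same two applications of the local scaling inequality (at $x=x_t$ via convexity, and at $x=v_t$ via the gradient swap using optimality of $v_t$ together with $L$-smoothness), the same triangle inequality, and the same use of the burn-in $h_t\leq 1$ to unify the exponents into $h_t^{1/(q(q-1))}$. If anything, your version is slightly more careful than the paper's in pairing $\|\nabla f(x_t)-\nabla f(x^*)\|_*$ with $\|x^*-v_t\|$ via the dual norm.
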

\begin{proof}
We apply the local scaling inequality \eqref{eq:local_scaling_lemma} with $x=v_t$ and $x=x_t$.
We obtain two important inequalities: one that upper bounds $||x - x^*||$ in terms of $f(x)-f(x^*)$ and another that upper bounds $||v_t - x^*||$ in terms of $||x^* - x_t||$, where $v_t$ is the Frank-Wolfe vertex related to iterate $x_t$. These two inequalities rely of convexity, $L$-smoothness and \eqref{eq:local_scaling_lemma}, but do not rely on strong convexity of the function $f$.

By optimality of the Frank-Wolfe vertex $v_t$, we have $\nabla f(x_t)^T v_t \leq \nabla f(x_t)^T x^*$. Hence, combining that with Cauchy-Schwartz, we get
\begin{eqnarray*}
||\nabla f(x^*) - \nabla f(x_t)|| ~||v_t - x^*|| &\geq& \langle \nabla f(x^*) - \nabla f(x_t); v_t - x^*\rangle + \underbrace{\langle \nabla f(x_t); v_t - x^*\rangle}_{\leq 0}\\
&\geq& \langle \nabla f(x^*) ; v_t - x^*\rangle \geq c \alpha/2 ||v_t - x^*||^q.
\end{eqnarray*}
Then, $L$-smoothness applied to the left hand side leaves us with
\BEQ\label{eq:FW_dir_bound}
||x_t - x^*|| \geq \frac{c \alpha}{2L} ||v_t - x^*||^{q-1},
\EEQ
and a triangular inequality gives
\begin{eqnarray*}
||x_t - v_t|| & \leq & ||v_t - x^*|| + ||x^* - x_t||\\
||x_t - v_t|| & \leq & \Big(\frac{2L}{c \alpha}\Big)^{1/(q-1)} ||x_t - x^*||^{1/(q-1)} + ||x^* - x_t||.
\end{eqnarray*}
Finally applying \eqref{eq:local_scaling_lemma} with $x=x_t$ and using that $\text{inf}_{x\in\mathcal{C}} ||\nabla f(x)||_* > c > 0$, we have $||x_t - x^*|| \leq \Big(\frac{2}{c\alpha}\Big)^{1/q} h_t^{1/q}$ which leads to
\[
||x_t - v_t|| \leq \Big(\frac{2L}{c \alpha}\Big)^{1/(q-1)} \Big(\frac{2}{c\alpha}\Big)^{1/(q(q-1))} h_t^{1/(q(q-1))} + \Big(\frac{2}{c\alpha}\Big)^{1/q} h_t^{1/q} ~.
\]
We can simplify this previous expression, and we assumed without loss of generality (\textit{i.e.} up to a burning-phase) that $h_t\leq 1$, which implies for $q\geq 2$ that 
$h_t^{1/(q(q-1))}\geq h_t^{1/q}$.
With $H \triangleq 2 \cdot \text{max}\Big\{\Big(\frac{2 L}{c \alpha}\Big)^{1/(q-1)} \Big(\frac{2}{c\alpha}\Big)^{1/(q(q-1))}, \Big(\frac{2}{c\alpha}\Big)^{1/q}\Big\}$, we then have
\[
||x_t - v_t|| \leq H h_t^{1/(q(q-1))}~.
\]
\end{proof}

We now proceed with the proof of Theorem~\ref{th:uniformly_cvx_set_rates_general_local}.

\begin{proof}[Proof of Theorem~\ref{th:uniformly_cvx_set_rates_general_local}]
With Lemma~\ref{lemma:local_scaling_inequality_consequence}, which satisfies the assumption of Theorem~\ref{th:uniformly_cvx_set_rates_general_local}, we have
\[
||x_t - v_t|| \leq H h_t^{1/(q(q-1))},
\]
with $H \triangleq 2 \cdot \text{max}\Big\{\Big(\frac{2L}{c \alpha}\Big)^{1/(q-1)} \Big(\frac{2}{c\alpha}\Big)^{1/(q(q-1))}, \Big(\frac{2}{c\alpha}\Big)^{1/q}\Big\}$. We plug this last expression in the classical descent guarantee given by $L$-smoothness
\begin{eqnarray*}
h_{t+1} & \leq & (1 - \gamma) h_t + \gamma^2\frac{L}{2} ||v_t - x_t||^2\\
h_{t+1} &\leq & (1 - \gamma) h_t + \gamma^2\frac{L}{2} H^2 h_t^{2/(q(q-1))}~.
\end{eqnarray*}
The optimal decrease $\gamma\in[0,1]$ is $\gamma^* = \text{min}\Big\{ \frac{h_t^{1- 2/(q(q-1))}}{L H^2}, 1\Big\}$. When $\gamma^*=1$, or equivalently $h_t\geq \big(LH^2\big)^{1 - 2/(q(q-1))}$, we have $h_{t+1} \leq h_t /2$. In other words, for the very first iterations, there is a brief linear convergence regime. Otherwise, when $\gamma^*\leq 1$, we have
\BEQ
h_{t+1} \leq h_t \Big( 1 - \frac{1}{2LH^2} h_t^{1 - 2/(q(q-1))} \Big)~.
\EEQ
When $q=2$, this corresponds to the strongly convex case and we recover the classical linear-convergence regime.
We conclude using Lemma~\ref{lem:general_recursive_equation} that the rate is $\mathcal{O}\Big(1/T^{1/(1-2/(q(q-1)))}\Big)$.
\end{proof}


A similar approach appears in \citep{dunn1979rates} which introduces the following functional
\[
a_{x^*}(\sigma) \triangleq \underset{\substack{x \in\mathcal{C} \\ ||x - x^*||\geq \sigma}}{\text{inf }} \langle \nabla f(x^*); x - x^*\rangle,
\]
and shows than when there exists $A>0$ such that $a_{x^*}(\sigma) \geq A ||x - x^*||^2$, then the Frank-Wolfe algorithm converges linearly, under appropriate line-search rules. 
This result of \citep{dunn1979rates} thus subsumes that of \citep{levitin1966constrained,demyanov1970}. 
However, no analysis was conducted for uniformly (but not strongly) convex set.

In Lemma~\ref{lem:simple_local_UC_imply_local_scaling} we showed that a given quantification of local uniform convexity implies the local scaling inequality and hence accelerated convergence rates. However, there are many situations where such a local notion of uniform convexity does not hold but \eqref{eq:local_scaling_inequality} does. This was the essence of \citep[Remark~3.5.]{dunn1979rates} that we state here.

\begin{corollary}\label{cor:more_than_UC}
Assume there exists a compact and $(\alpha, q)$-uniformly convex set $\Gamma$ such that $\mathcal{C}\subset\Gamma$ and $N_{\Gamma}(x^*)\subset N_{\mathcal{C}}(x^*)$, where $x^*$ is the solution of \eqref{eq:opt_problem}. If $-\nabla f(x^*)\in N_{\Gamma}(x^*)$, then
\eqref{eq:local_scaling_inequality} holds at $x^*$ with the $(\alpha, q)$ parameters.
\end{corollary}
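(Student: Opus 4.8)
The plan is to reduce the claim to a single application of Lemma~\ref{lemma:global_scaling_UC}, applied not to $\mathcal{C}$ itself but to the enveloping uniformly convex set $\Gamma$. The conceptual content is precisely that, although $\mathcal{C}$ need not be uniformly convex, it sits inside a set $\Gamma$ that is, and the optimality direction $-\nabla f(x^*)$ is normal to $\Gamma$ at $x^*$ by hypothesis, so the global scaling inequality available on $\Gamma$ transfers to a local scaling inequality at $x^*$ for $\mathcal{C}$.

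First I would translate the hypothesis $-\nabla f(x^*)\in N_{\Gamma}(x^*)$ into an optimality statement on $\Gamma$. By definition of the normal cone, $-\nabla f(x^*)\in N_{\Gamma}(x^*)$ means $\langle -\nabla f(x^*); y - x^*\rangle \leq 0$ for every $y\in\Gamma$, i.e. $x^*\in\argmax_{v\in\Gamma}\langle -\nabla f(x^*); v\rangle$. In other words, $x^*$ is exactly the Frank-Wolfe vertex of the larger set $\Gamma$ in the direction $-\nabla f(x^*)$. This is the only place where the position of $-\nabla f(x^*)$ relative to $N_{\Gamma}(x^*)$ is used.

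Next I would invoke Lemma~\ref{lemma:global_scaling_UC} directly on $\Gamma$, which is $(\alpha,q)$-uniformly convex by assumption, taking $\phi = -\nabla f(x^*)$ and $v_{\phi}=x^*$. Since the lemma holds for an arbitrary point of the set and $\mathcal{C}\subset\Gamma$, any $x\in\mathcal{C}$ is an admissible choice, so we obtain
\[
\langle -\nabla f(x^*); x^* - x\rangle \geq \frac{\alpha}{2}\, ||x^* - x||^q\, ||\nabla f(x^*)||_*
\]
for all $x\in\mathcal{C}$, which is exactly \eqref{eq:local_scaling_inequality} with parameters $(\alpha,q)$.

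The remaining hypothesis $N_{\Gamma}(x^*)\subset N_{\mathcal{C}}(x^*)$ serves only to reconcile the construction with $x^*$ being the solution of \eqref{eq:opt_problem} over $\mathcal{C}$: combined with $-\nabla f(x^*)\in N_{\Gamma}(x^*)$, it gives $-\nabla f(x^*)\in N_{\mathcal{C}}(x^*)$, the first-order optimality condition on $\mathcal{C}$. (This inclusion is in fact automatic from $\mathcal{C}\subset\Gamma$, since a larger set imposes more constraints and hence yields a smaller normal cone at a shared boundary point.) I do not anticipate any genuine obstacle here: the result is a direct corollary, and the entire substance lies in observing that uniform convexity of $\Gamma$ together with the alignment of $-\nabla f(x^*)$ with $N_{\Gamma}(x^*)$ suffices to propagate the \eqref{eq:scaling_UC} bound of $\Gamma$ into a \eqref{eq:local_scaling_inequality} bound for $\mathcal{C}$.
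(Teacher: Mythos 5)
Your proof is correct and is essentially the paper's own argument: the paper simply inlines the midpoint-plus-perturbation computation of Lemma~\ref{lemma:global_scaling_UC} applied to $\Gamma$ with $\phi=-\nabla f(x^*)$ and maximizer $v_\phi = x^*$ (guaranteed by $-\nabla f(x^*)\in N_{\Gamma}(x^*)$), which is exactly the reduction you invoke. Your parenthetical observation that $N_{\Gamma}(x^*)\subset N_{\mathcal{C}}(x^*)$ is automatic from $\mathcal{C}\subset\Gamma$ under the paper's definition of the normal cone is also accurate.
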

\begin{proof}
Here, because $N_{\Gamma}(x^*)\subset N_{\mathcal{C}}(x^*)$, we have that $x^*\in\text{argmax}_{v\in\Gamma} \langle -\nabla f(x^*); v\rangle$. Also, for $x\in\mathcal{C}\subset\Gamma$, by $(\alpha, q)$-uniform convexity of $\Gamma$, we also have that for any $z\in\mathbb{R}^d$ of unit norm that $(x^* + x)/2 + \alpha/4 ||x^* - x||^q z \in\Gamma$. Then, by optimality of $x^*$, we have
$\langle -\nabla f(x^*); x^*\rangle  \geq  \langle -\nabla f(x^*); (x^* + x)/2\rangle + \alpha/4 ||x^* - x||^q \langle -\nabla f(x^*), z\rangle$. Choosing the best $z$ and subtracting both sides by $\langle -\nabla f(x^*); x\rangle$, implies (for any $x\in\mathcal{C}$)
$\langle -\nabla f(x^*); x^* - x\rangle \geq \alpha/2 ||x^* - x||^q ||\nabla f(x^*)||_*$.
\end{proof}

There exist numerous notions of local uniform convexity of a set that may imply local scaling inequalities. See for instance, the local directional strong convexity in \citep[\S Local Strong Convexity]{goncharov2017strong}.
Alternatively, in the context of functions, H\"olderian Errors Bounds (HEB) offer a weaker description of localized uniform convexity assumptions while retaining the same convergence rates \citep{kerdreux2019restarting}. 
And these are known to hold generically for various classes of function \citep{law1965ensembles,kurdyka1998gradients,bolte2007lojasiewicz}. 
Obtaining a similar characterization for set is of interest.
In particular, it is natural to relate enhanced convexity properties of the set gauge function $||\cdot||_{\mathcal{C}}$ \citep[\S 15]{Rock70} to convexity properties of the set or directly to local scaling inequalities. 
For instance, local uniform convexity of the gauge $||\cdot||_{\mathcal{C}}$ implies a local scaling inequality for $\mathcal{C}$ (see Lemma \ref{lem:UC_gauge_to_scaling}). 
This suggests that error bounds as guaranteed with {\L}ojasiewicz-type arguments on the gauge function should imply local scaling inequalities, showing that theses inequalities hold somewhat generically.

\subsection{Interpolating Sublinear Rates for Arbitrary $x^*$}\label{ssec:sublinear_sublinear}
When the function is $\mu$-strongly convex and the set $\mathcal{C}$ is $\alpha$-strongly convex, \cite{garber2015faster} show that the Frank-Wolfe algorithm (with short step) enjoys a general $\mathcal{O}(1/T^2)$ convergence rate. 
In particular, this result does not depend on the location of $x^*$ with respect to $\mathcal{C}$. We now generalize this result by relaxing the strong convexity of the constraint set $\mathcal{C}$ and the quadratic error bound on $f$ \citep[(1)]{garber2015faster}.

\paragraph{H\"olderian Error Bounds.}
Let $f$ be a strictly convex $L$-smooth function and $x^*=\text{argmin}_{x\in\mathcal{C}}f(x)$ where $\mathcal{C}$ is a compact convex set; the strict convexity assumption is only required to simplify exposition and the results hold more generally with the usual generalizations.
We say that $f$ satisfies a $(\mu, \theta)$-H\"olderian Error Bound when there exists $\theta\in[0,1/2]$ such that
\BEQ\label{eq:HEB_def}\tag{HEB}
||x - x^*|| \leq \mu (f(x) - f(x^*))^{\theta}.
\EEQ
When the function $f$ is subanalytic, \eqref{eq:HEB_def} is known to hold generically \citep{law1965ensembles,kurdyka1998gradients,bolte2007lojasiewicz}. 
For instance, when $f$ is $(\mu, r)$-uniformly convex with $r\geq 2$ (see Definition~\ref{def:fct_uniform_convexity}), then it satisfies a $((2/\mu)^{1/r}, 1/r)$-H\"olderian Error Bound, which follows from
\[
f(x_t) \geq f(x^*) + \underbrace{\langle \nabla f(x^*) ; x_t - x^* \rangle}_{\geq 0} + \frac{\mu}{2} ||x_t - x^*||_2^r.
\]

Hence we generalize the convergence result of \citep{garber2015faster} and show that as soon as the set $\mathcal{C}$ is $(\alpha, q)$-uniformly convex with $q\geq 2$ and the function $f$ satisfies a non-trivial $(\mu,\theta)$-HEB, the Frank-Wolfe algorithm (with short step) enjoys an accelerated convergence rate with respect to $\mathcal{O}(1/T)$. In particular when $f$ is $\mu$-strongly convex, it satisfies a $(\mu,1/2)$-HEB and by varying $q\geq 2$ we interpolate all sublinear convergence rates between $\mathcal{O}(1/T)$ and $\mathcal{O}(1/T^2)$.

In Lemma~\ref{lemma:scaling_UC_HEB}, we will show an upper bound on $||x_t - v_t||$ when combining the uniform convexity of $\mathcal{C}$ and a H\"olderian Error Bound for $f$. 
Lemma \ref{lemma:scaling_UC_HEB} is then the basis for the convergence analysis and similar to Lemma \ref{lemma:global_scaling_UC}. Overall, Theorem~\ref{th:uniformly_cvx_set_rates_general}, Theorem~\ref{th:uniformly_cvx_set_rates_general_local} and Theorem~\ref{th:rates_uniform_convexity} give an almost complete picture of all the accelerated convergence regimes one can expect with the vanilla Frank-Wolfe algorithm.

\begin{lemma}\label{lemma:scaling_UC_HEB}
Consider a compact and $(\alpha, q)$-uniformly convex set $\mathcal{C}$ with respect to $||\cdot||$. Denote $f$ a strictly convex $L$-smooth function and $x^*=\text{argmin}_{x\in\mathcal{C}}f(x)$. Assume that $f$ satisfies a $(\mu, \theta)$-H\"olderian Error Bound $||x - x^*|| \leq \mu (f(x) - f(x^*))^{\theta}$ with $\theta\in[0,1/2]$. Then for $x_t\in\mathcal{C}$ we have $\alpha/\mu ||x_t - v_t||^{q} h_t^{1-\theta} \leq g(x_t)$, where $g(x_t)$ is the Frank-Wolfe gap and $v_t$ the Frank-Wolfe vertex.
\end{lemma}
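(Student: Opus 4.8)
The plan is to chain the global scaling inequality of Lemma~\ref{lemma:global_scaling_UC} with a lower bound on $||\nabla f(x_t)||_*$ furnished by the Hölderian Error Bound. Notice that the claimed inequality has exactly the same shape as \eqref{eq:scaling_UC}, except that the factor $||\nabla f(x_t)||_*$ appearing there is replaced by (a constant times) $h_t^{1-\theta}$. So the entire proof reduces to establishing $||\nabla f(x_t)||_* \geq \tfrac{1}{\mu} h_t^{1-\theta}$ and then substituting.

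First I would invoke Lemma~\ref{lemma:global_scaling_UC} with $\phi = -\nabla f(x_t)$ and $v_\phi = v_t$, which directly gives
\[
g(x_t) = \langle -\nabla f(x_t); v_t - x_t\rangle \geq \frac{\alpha}{2} ||v_t - x_t||^q \, ||\nabla f(x_t)||_*.
\]
The key step is then the lower bound on the gradient's dual norm. By convexity of $f$ one has $h_t = f(x_t) - f(x^*) \leq \langle \nabla f(x_t); x_t - x^*\rangle$, and the definition of the dual norm yields $\langle \nabla f(x_t); x_t - x^*\rangle \leq ||\nabla f(x_t)||_* \, ||x_t - x^*||$. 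Feeding the Hölderian Error Bound $||x_t - x^*|| \leq \mu h_t^\theta$ into the right-hand side and rearranging produces $||\nabla f(x_t)||_* \geq \tfrac{1}{\mu} h_t^{1-\theta}$. Substituting this into the scaling inequality gives $g(x_t) \geq \tfrac{\alpha}{2\mu} ||v_t - x_t||^q h_t^{1-\theta}$, which is the asserted bound (up to the precise multiplicative constant, which one should reconcile with the stated $\alpha/\mu$).

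I do not expect a genuine obstacle here: the argument is a short concatenation of convexity, norm duality, the HEB, and the already-established uniform-convexity scaling. The only points meriting a line of care are the degenerate case $h_t = 0$ (equivalently $x_t = x^*$), where both sides of the target inequality vanish and the statement is trivial, and the bookkeeping of the constant — my derivation yields $\alpha/(2\mu)$ rather than the $\alpha/\mu$ written in the statement, so one should verify whether the factor of two is absorbed elsewhere or is a slight looseness in the constant.
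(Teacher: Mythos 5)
Your proof is correct and takes essentially the same route as the paper's: invoke Lemma~\ref{lemma:global_scaling_UC} with $\phi=-\nabla f(x_t)$, then chain convexity, the dual-norm inequality, and \eqref{eq:HEB_def} to get $||\nabla f(x_t)||_* \geq h_t^{1-\theta}/\mu$ and substitute. Your constant bookkeeping is in fact more careful than the paper's, which silently cites Lemma~\ref{lemma:global_scaling_UC} without its factor $1/2$; the honest constant is $\alpha/(2\mu)$ as you note, a harmless looseness that only shifts the constant $C$ in Theorem~\ref{th:rates_uniform_convexity}.
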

\begin{proof}
By Lemma~\ref{lemma:global_scaling_UC} we have $g(x_t) \geq \alpha ||x_t - v_t||^q ||\nabla f(x_t)||_*$.
Then, by combining the convexity of $f$, Cauchy-Schwartz and $(\mu, \theta)$-H\"olderian Error Bound, we have
\[
f(x) - f(x^*) \leq \langle \nabla f(x); x - x^* \rangle \leq ||\nabla f(x)||_* \cdot ||x- x^*|| \leq \mu ||\nabla f(x)||_* \cdot \big( f(x) - f(x^*) \big)^{\theta},
\]
so that $\big(f(x) - f(x^*)\big)^{1-\theta} \leq ||\nabla f(x)||_*$ and finally $g(x_t) \geq \alpha ||x_t - v_t||^q h_t^{1-\theta}$.
\end{proof}

\begin{theorem}\label{th:rates_uniform_convexity}
Consider a $L$-smooth convex function $f$ that satisfies a $(\mu, \theta)$-HEB with $\mu>0$ and $\theta\in]0,1/2]$.
Assume $\mathcal{C}$ is a compact and $(\alpha, q)$-uniformly convex set with respect to $||\cdot||$ with $q \geq 2$.
Then the iterates of the Frank-Wolfe algorithm, with short step or exact line search, satisfy
\BEQ\label{eq:cv_rate_hazan}
f(x_T) - f(x^*) \leq M/(T + k)^{1/(1 - 2\theta/q)},
\EEQ
with $k \triangleq (2 - 2^{\eta})/({2^\eta -1})$ and $ M \triangleq \text{max}\{h_0 k^{1/\eta}, 2/((\eta-(1-\eta)(2^\eta -1))C)^{1/\eta}\}$, where $\eta \triangleq 1 - 2\theta/q$ and $C\triangleq (\alpha/\mu)^{2/q}/L$. In particular for $q=2$ and $\theta=1/2$, we obtain the $\mathcal{O}(1/T^2)$ of \citep{garber2015faster}.
\end{theorem}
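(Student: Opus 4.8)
The plan is to mirror the proof of Theorem~\ref{th:uniformly_cvx_set_rates_general}, replacing the uniform lower bound $||\nabla f||_*\geq c$ by the combined uniform-convexity/HEB scaling of Lemma~\ref{lemma:scaling_UC_HEB}, and then feeding the resulting scalar recursion into Lemma~\ref{lem:general_recursive_equation}. First I would start from the $L$-smoothness descent inequality $f(x_{t+1}) \leq f(x_t) - \gamma g(x_t) + \frac{\gamma^2}{2}L||x_t - v_t||^2$ and optimize the short step $\gamma = \min\{1, g(x_t)/(L||x_t-v_t||^2)\}$, which yields $h_{t+1} \leq h_t - \frac{g(x_t)}{2}\min\{1; g(x_t)/(L||x_t-v_t||^2)\}$ exactly as before. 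The exact line search can only do better, so the same bound applies.

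The key step is to convert the ratio $g(x_t)/||x_t-v_t||^2$ into a power of the primal gap $h_t$; this is the analog of \eqref{eq:central_equation}, except the exponent now carries the HEB parameter $\theta$ rather than being pinned by $\inf||\nabla f||_*$. Here I would invoke Lemma~\ref{lemma:scaling_UC_HEB}, which gives $g(x_t) \geq (\alpha/\mu)||x_t-v_t||^q h_t^{1-\theta}$. Solving for $||x_t-v_t||^2$ and using $g(x_t)\geq h_t$ together with $q\geq 2$ (so that $1-2/q\geq 0$) yields
\[
\frac{g(x_t)}{||x_t-v_t||^2} \geq \Big(\frac{\alpha}{\mu}\Big)^{2/q} g(x_t)^{1-2/q}\, h_t^{2(1-\theta)/q} \geq \Big(\frac{\alpha}{\mu}\Big)^{2/q} h_t^{\,1-2\theta/q}.
\]
The exponent collapses to $\eta = 1-2\theta/q$, which is precisely where the interpolation parameter enters; note $g(x_t)\geq h_t$ is used twice (once in each factor of the forthcoming $\min$), which is legitimate since the Frank-Wolfe gap upper-bounds the primal gap by convexity.

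Substituting this bound gives the scalar recursion
\[
h_{t+1} \leq h_t\,\max\Big\{\tfrac{1}{2};\ 1 - \tfrac{C}{2}\,h_t^{\eta}\Big\}, \qquad C = (\alpha/\mu)^{2/q}/L,
\]
which is exactly of the form handled by Lemma~\ref{lem:general_recursive_equation} with exponent $\eta$ and constant $C$. Applying that lemma closes the argument and produces the rate $M/(T+k)^{1/(1-2\theta/q)}$ with the stated $k$ and $M$; the specialization $q=2,\ \theta=1/2$ gives $\eta=1/2$, hence $\mathcal{O}(1/T^2)$, recovering \citep{garber2015faster}. I do not expect a serious obstacle, since the proof is mechanical once Lemma~\ref{lemma:scaling_UC_HEB} is in hand: the only point requiring care is the exponent bookkeeping in the displayed inequality and checking that $1-2/q\geq 0$ is what allows the replacement $g(x_t)^{1-2/q}\geq h_t^{1-2/q}$.
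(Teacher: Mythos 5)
Your proposal is correct and follows essentially the same route as the paper's own proof: the short-step descent inequality, Lemma~\ref{lemma:scaling_UC_HEB} to bound $g(x_t)/||x_t-v_t||^2$ by $(\alpha/\mu)^{2/q}h_t^{1-2\theta/q}$ using $g(x_t)\geq h_t$, and Lemma~\ref{lem:general_recursive_equation} to solve the recursion. Your bookkeeping of the factor $\tfrac{1}{2}$ in the recursion (effective constant $C/2$) is in fact slightly more careful than the paper's final display, but this only affects the constant $M$, not the rate.
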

\begin{proof}
From the proof of Theorem~\ref{th:uniformly_cvx_set_rates_general}, $L$-smoothness and the step size decision we have
\[
h(x_{t+1}) \leq h(x_t) - \frac{g(x_t)}{2} \cdot \text{min}\Big\{ 1; \frac{g(x_t)}{L ||x_t - v_t||^2}\Big\}.
\]
Then using Lemma \ref{lemma:scaling_UC_HEB}, we can rewrite
\[
\frac{g(x_t)}{||x_t - v_t||^2} = \Big( \frac{g(x_t)^{q/2-1} g(x_t)}{||x_t - v_t||^q}\Big)^{2/q} \geq \Big(\alpha/\mu\Big)^{2/q} g(x_t)^{1 - 2/q}h_t^{(1-\theta)2/q}.
\]
And because $g(x_t) \geq h_t$, we have
\[
\frac{g(x_t)}{||x_t - v_t||^2} \geq \Big(\alpha/\mu\Big)^{2/q} h_t^{1 - 2\theta/q}.
\]
We finally end up with the following recursion
\[
h(x_{t+1}) \leq h(x_t) \cdot \text{max}\Big\{ \frac{1}{2}; 1 - \Big(\alpha/\mu\Big)^{2/q} h_t^{1 - 2\theta/q}/L\Big\},
\]
and we conclude with Lemma~\ref{lem:general_recursive_equation}.
\end{proof}

Overall, Theorem~\ref{th:uniformly_cvx_set_rates_general}, Theorem~\ref{th:uniformly_cvx_set_rates_general_local} and Theorem~\ref{th:rates_uniform_convexity} give an (almost) complete picture of all the accelerated convergence regimes one can expect with the vanilla Frank-Wolfe algorithm.

\section{Online Learning with Linear Oracles and Uniform Convexity}\label{sec:online_optimization}
In online convex optimization, the algorithm sequentially decides an action, a point $x_t$ in a set $\mathcal{C}$, and then incurs a (convex smooth) loss $l_t(x_t)$. 
Algorithms are designed to reduce the cumulative incurred losses over time, $F_t = \frac{1}{t} \sum_{\tau=1}^t l_{\tau}(x_{\tau})$. The comparison to the best action in hindsight is then defined as the \textit{regret} of the algorithm, \textit{i.e.} $R_T \triangleq \sum_{t=1}^{T} l_t(x_t) - \text{min}_{x\in\mathcal{C}} \sum_{t=1}^{T}{l_t(x)}$.

Interesting correspondences have been established between the Frank-Wolfe algorithm and online learning algorithms. 
For instance, recent works \citep{abernethy2017frank,abernethy2018faster} derive new Frank-Wolfe-like algorithms and analyses via two online learning algorithms playing against each other.
Furthermore, a series of work proposed projection-free online algorithms inspired by their offline counterpart, \textit{e.g.} \cite{hazan2012projection} design a Frank-Wolfe online algorithm. 
In following works, \cite{garber2013playing,garber2013linearly} propose projection-free algorithms for online and offline optimization with optimal convergence guarantees where the decision sets are polytopes and the loss functions are strongly-convex. In the same setting, \cite{lafond2015online} analyze the online equivalent of the away-step Frank-Wolfe algorithm via a similar analysis to \citep{lacoste2013affine,lacoste2015global} in the offline setting. Recently, \cite{Hazan2020} proposed a randomized projection-free algorithm that has a regret of $\mathcal{O}(T^{2/3})$ with high probability improving over the deterministic $\mathcal{O}(T^{3/4})$ of \citep{hazan2012projection} and \cite{levy2019projection} designed a projection-free online algorithm over smooth decision sets; dual to uniformly convex sets \citep{vial1983strong}.

\paragraph{Online Linear Optimization and Set Curvature.}
At a high level, when the constraint set is strongly-convex, the analyses of the simple Follow-The-Leader (FTL) for online linear optimization \citep{huang2016following} is analogous to the offline convergence analyses of the Frank-Wolfe algorithm when not assuming strong-convexity of the objective function as in \citep{polyak1966existence,demyanov1970,dunn1979rates}. Indeed, by definition, linear functions do not enjoy non-linear lower bounds, \textit{i.e.} uniform convexity-like assumptions.

In the online linear setting, we write the functions $l_t(x) = \langle c_t; x\rangle$ and assume that $(c_t)$ belong to a bounded set $\mathcal{W}$ (smoothness). FTL consists in choosing the action $x_t$ at time $t$ that minimizes the cumulative sum of the previously observed losses, \textit{i.e.} each iteration solves the minimization of a linear function over $\mathcal{C}$
\BEQ\label{eq:follow_the_leader}
x_T \in \underset{x\in\mathcal{C}}{\text{argmin}} \sum_{t=1}^{T-1} l_t(x) =  \langle \sum_{t=1}^{T-1} c_t; x \rangle.
\EEQ

In general, FTL incurs a worst-case regret of $\mathcal{O}(T)$ \citep{shalev2012online}. For online linear learning, \cite{huang2016following,huang2017following} study the conditions under which the strong convexity of the decision set $\mathcal{C}$ leads to improved regret bounds.
In particular, when there exists a $c>0$ such that for all $T$, $\text{min}_{1\leq t \leq T}||\frac{1}{t}\sum_{\tau=1}^{t}{c_{\tau}}||_* \geq c > 0$, then FTL enjoys the optimal regret bound of $\mathcal{O}(\log(T))$ \citep{huang2017following}. This result is the counter part of the offline geometrical convergence analyses of the Frank-Wolfe algorithm when $\text{inf}_{x\in\mathcal{C}}||\nabla f(x)||_* \geq c >0$ and $\mathcal{C}$ is a strongly convex set \citep{polyak1966existence,demyanov1970,dunn1979rates}.
In Theorem~\ref{thm:online_linear_UC}, we hence further support this analogy between online and offline settings. We show that FTL enjoys continuously interpolated regret bounds between $\mathcal{O}(\log(T))$ and $\mathcal{O}(T)$ for all types of uniform convexity of the decision sets.
Again, this covers a much broader spectrum of \textit{curved} sets, and is similar to Theorem~\ref{th:uniformly_cvx_set_rates_general} in the Frank-Wolfe setting. A proof is deferred to Appendix \ref{app:online}. 

\begin{theorem}\label{thm:online_linear_UC}
Let $\mathcal{C}$ be a compact and $(\alpha, q)$-uniformly convex set with respect to $||\cdot||$. Assume that $L_{T} = \text{min}_{1\leq t \leq T} ||\frac{1}{t}\sum_{\tau=1}^t c_\tau||_* > 0$. Then the regret $R_T$ of FTL \eqref{eq:follow_the_leader} for online linear optimization satisfies
\begin{equation}
  \left\{
    \begin{split}
    R_T &\leq 2M \Big(\frac{2M}{\alpha L_T}\Big)^{1/(q-1)} \Big(\frac{q-1}{q-2}\Big) T^{1 - 1/(q-1)} &~~\text{ when }  q > 2 \\
    R_T &\leq \frac{4M^2}{\alpha L_T} (1 + \log(T)) &~~\text{ when } q=2,
    \end{split}
  \right.
\end{equation}
where $M = \text{sup}_{c\in\mathcal{W}}||c||_*$, with the losses $l_t(x) = \langle c_t; x\rangle$ and $(c_t)$ belong to the bounded set $\mathcal{W}$.
\end{theorem}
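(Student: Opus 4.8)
The plan is to follow the standard two-step recipe for analyzing Follow-The-Leader, but to replace the usual ``stability of the leader'' estimate by one derived from the uniform convexity of $\mathcal{C}$. Write $w_t \triangleq \sum_{\tau=1}^{t} c_\tau$ (with $w_0 = 0$), so that the FTL action is $x_t \in \argmax_{x\in\mathcal{C}} \langle -w_{t-1}; x\rangle$ and $x_{t+1}$ is the leader after round $t$. First I would invoke the classical ``Follow-The-Leader, Be-The-Leader'' lemma: since $x_{t+1}$ minimizes $\sum_{\tau=1}^{t} l_\tau$, one has $\sum_{t=1}^T l_t(x_{t+1}) \leq \min_{x\in\mathcal{C}} \sum_{t=1}^T l_t(x)$, and therefore
\[
R_T \leq \sum_{t=1}^T \langle c_t; x_t - x_{t+1}\rangle \leq M \sum_{t=1}^T \|x_t - x_{t+1}\|,
\]
the last step being Cauchy--Schwarz together with $\|c_t\|_* \leq M$. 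This reduces everything to controlling how far two consecutive leaders can move.

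The heart of the argument --- and the step I expect to be the main obstacle --- is bounding $\|x_t - x_{t+1}\|$ using curvature, which is the online mirror of the offline scaling inequality. Here I would apply Lemma~\ref{lemma:global_scaling_UC} to the linear functional $\phi = -w_t$, whose maximizer over $\mathcal{C}$ is $x_{t+1}$, evaluated at the point $x_t$:
\[
\langle -w_t; x_{t+1} - x_t\rangle \geq \frac{\alpha}{2}\|x_{t+1} - x_t\|^q \|w_t\|_*.
\]
On the other hand, the optimality of $x_t$ for $\langle -w_{t-1}; \cdot\rangle$ gives $\langle -w_{t-1}; x_{t+1} - x_t\rangle \leq 0$, so that $\langle -w_t; x_{t+1} - x_t\rangle \leq \langle -c_t; x_{t+1} - x_t\rangle = \langle c_t; x_t - x_{t+1}\rangle \leq M\|x_t - x_{t+1}\|$ (note that at $t=1$ this uses only $w_0=0$, so the arbitrariness of $x_1$ is harmless). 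Chaining the two displays and dividing by $\|x_t - x_{t+1}\|$ yields the leader-stability estimate
\[
\|x_t - x_{t+1}\| \leq \Big(\frac{2M}{\alpha \|w_t\|_*}\Big)^{1/(q-1)}.
\]

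To turn this into a regret bound I would use the hypothesis on $L_T$: by definition $\|\tfrac{1}{t} w_t\|_* \geq L_T$, hence $\|w_t\|_* \geq t L_T$ for every $t \leq T$, giving $\|x_t - x_{t+1}\| \leq (2M/(\alpha t L_T))^{1/(q-1)}$. Substituting into the regret reduction leaves a single scalar series $\sum_{t=1}^T t^{-1/(q-1)}$ to estimate. For $q > 2$ the exponent $1/(q-1) < 1$, and comparing the sum with $\int_0^T s^{-1/(q-1)}\,ds$ produces the factor $\frac{q-1}{q-2}\, T^{1 - 1/(q-1)}$; for $q = 2$ the exponent equals one and the harmonic sum is bounded by $1 + \log T$. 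Collecting constants then reproduces the two claimed regimes (up to the explicit leading constants displayed in the statement). The only genuinely new ingredient is the curvature-based stability estimate of the second paragraph; the Be-The-Leader reduction and the summation are routine, and the overall structure parallels the offline proof of Theorem~\ref{th:uniformly_cvx_set_rates_general}, with the cumulative cost vector $w_t$ playing the role that the gradient $\nabla f(x_t)$ played there.
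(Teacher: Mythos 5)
Your proof is correct, and it takes a genuinely different route from the paper's. The paper follows \citep[Theorem 5]{huang2017following}: it works with the \emph{averaged} cumulative gradients $\nabla F_t = \frac{1}{t}\sum_{\tau=1}^t c_\tau$, invokes \citep[Proposition 2]{huang2017following} to write $R_T \leq \sum_{t=1}^T t\,\langle x_{t+1} - x_t; \nabla F_t\rangle$, and then controls each term via the two-functional stability estimate of Lemma~\ref{lem:scaling_UC_online} (the uniformly convex generalization of \citep[(6)]{huang2017following}), which bounds $\langle v_{\phi_1} - v_{\phi_2}; \phi_1\rangle$ for the maximizers of two nearby linear functionals, using $\|\nabla F_t - \nabla F_{t-1}\|_* \leq 2M/t$. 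You instead use the classical Be-The-Leader reduction $R_T \leq \sum_t \langle c_t; x_t - x_{t+1}\rangle$ with \emph{unnormalized} cumulative vectors $w_t$, and derive the leader-stability bound $\|x_t - x_{t+1}\| \leq \big(2M/(\alpha \|w_t\|_*)\big)^{1/(q-1)}$ directly from the offline one-point scaling inequality of Lemma~\ref{lemma:global_scaling_UC} (applied with $\phi = -w_t$ at the point $x_t$), combined with the one-sided optimality of $x_t$ for $-w_{t-1}$ and H\"older. Your stability estimate is essentially a one-sided special case of Lemma~\ref{lem:scaling_UC_online} exploiting that the perturbation is exactly $c_t$; what your route buys is self-containedness — no appeal to \citep[Proposition 2]{huang2017following} or to the separate appendix lemma — and in fact slightly sharper constants (your final bounds are $M(2M/(\alpha L_T))^{1/(q-1)}\frac{q-1}{q-2}T^{1-1/(q-1)}$ and $\frac{2M^2}{\alpha L_T}(1+\log T)$, a factor $2$ better than the stated ones, so the theorem follows a fortiori). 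What the paper's route buys is the standalone Lemma~\ref{lem:scaling_UC_online}, a H\"older-continuity statement for the linear maximization oracle over uniformly convex sets that applies to \emph{arbitrary} pairs of functionals and directly parallels the structure of the strongly convex analysis in \citep{huang2017following}. The remaining steps (the lower bound $\|w_t\|_* \geq t L_T$, the integral comparison yielding $\frac{q-1}{q-2}T^{1-1/(q-1)}$ for $q>2$, and the harmonic bound $1+\log T$ for $q=2$) coincide with the paper's.
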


The following is the generalization of \citep[(6)]{huang2017following} when the set is uniformly convex (see Definition~\ref{def:set_uniform_convexity}). Note that in our version $\mathcal{C}$ can be uniformly convex with respect to any norm. The proof is deferred to Appendix \ref{app:online}.

\begin{lemma}\label{lem:scaling_UC_online}
Assume $\mathcal{C}\subset\mathbb{R}^d$ is a $(\alpha, q)$-uniformly convex set with respect to $||\cdot||$, with $\alpha > 0$ and $q\geq 2$. Consider the non-zero vectors $\phi_1, \phi_2\in\mathbb{R}^d$ and $v_{\phi_1}\in\argmax_{v\in\mathcal{C}}{\langle \phi; v \rangle}$ and $v_{\phi_2}\in\argmax_{v\in\mathcal{C}}{\langle \phi; v \rangle}$. Then
\BEQ\label{eq:scaling_UC_online}
\langle v_{\phi_1} - v_{\phi_2}; \phi_1 \rangle \leq \Big(\frac{1}{\alpha}\Big)^{1/(q-1)} \frac{||\phi_1 -\phi_2||_*^{1+ 1/(q-1)}}{(\text{max}\{||\phi_1||_*, ||\phi_2||_*\})^{1/(q-1)}},
\EEQ
where $||\cdot||_*$ is the dual norm to $||\cdot||$.
\end{lemma}

\begin{proof}[Proof of Theorem \ref{thm:online_linear_UC}]
The proof follows exactly that of \citep[Theorem 5]{huang2017following}.
Write $M = \text{sup}_{c\in\mathcal{F}}||c||$, $F_t(x)= \frac{1}{t}\sum_{\tau=1}^t{\langle c_t; x\rangle}$ and short cut $\nabla F_t\triangleq \frac{1}{t}\sum_{\tau=1}^t{c_t}$ the gradient of the linear function $F_t(x)$. Recall that with FTL, $x_t$ is defined as
\[
x_t \in \text{argmin}_{x\in\mathcal{C}}  \langle \sum_{\tau=1}^{t-1} c_t; x \rangle.
\]
As in \citep[Theorem 5]{huang2017following} we have (for any norm $||\cdot||$)
\[
||\nabla F_t - \nabla F_{t-1}|| \leq \frac{2M}{t}.
\]
Using \citep[Proposition 2]{huang2017following} and Lemma~\ref{lem:scaling_UC_online} we get the following upper bound on the regret
\[
R_T = \sum_{t=1}^T t \langle x_{t+1} - x_t; \nabla F_t \rangle \leq \Big(\frac{1}{\alpha}\Big)^{1/(q-1)} \sum_{t=1}^T t \frac{||\nabla F_t -\nabla F_{t-1}||_*^{1+ 1/(q-1)}}{(\text{max}\{||\nabla F_t||_*, ||\nabla F_{t-1}||_*\})^{1/(q-1)}}.
\]
Hence, with $L_T = \text{min}_{1\leq t \leq T} ||\nabla F_t||_* > 0$, we have
\[
R_T \leq 2M\Big(\frac{2M}{\alpha L_T}\Big)^{1/(q-1)} \sum_{t=1}^T t^{-1/(q-1)}.
\]
Then we have for $q > 2$
\[
\sum_{t=1}^T t^{-1/(q-1)} = 1 + \sum_{t=2}^T t^{-1/(q-1)} \leq 1 + \int_{x=1}^{T-1}x^{- 1/(q-1)}dx = 1 + \Big[ \frac{t^{1 - 1/(q-1)}}{1-1/(q-1)}\Big]_{1}^{T-1},
\]
so that finally
\[
R_T \leq 2M\Big(\frac{2M}{\alpha L_T}\Big)^{1/(q-1)} \Big(\frac{q-1}{q-2}\Big) T^{1 - 1/(q-1)}.
\]
\end{proof}

With the simple FTL, we obtain non-trivial regret bounds, \textit{i.e.} $o(T)$,  whenever the set is uniformly convex, without any curvature assumption on the loss functions (because they are linear). In particular for $q\in[2,3]$, it improves over the general tight regret bound of $\mathcal{O}(\sqrt{T})$ for smooth convex losses and compact convex decision sets \citep{shalev2012online}. Interestingly, with the same assumption on $\mathcal{C}$, \cite{dekel2017online} obtain for online linear optimization, the same asymptotical regret bounds with a variation of Follow-The-Leader incorporating hints. It is remarkable that the presence of hints or the assumption $\text{min}_{1\leq t \leq T} ||\frac{1}{t}\sum_{\tau=1}^t c_\tau||_* \geq c > 0$ for all $T$ both lead to the same bounds.

\section{Examples of Uniformly Convex Objects}\label{sec:example_uniform_convexity}
The uniform convexity assumptions refine the convex properties of several mathematical objects, such as normed spaces, functions, and sets. 
In this section, we provide some connection between these various notions of uniform convexity. 
In Section \ref{ssec:uniform_convexity_space}, we recall that norm balls of uniformly convex spaces are uniformly convex sets, and show set uniform convexity of classic norm balls in Section \ref{ssec:uniform_convexity_classic_balls} and illustrate it with numerical experiments in Section \ref{sec:numerical_examples}. In Appendix \ref{ssec:uniform_convexity_functions}, we show that the level sets of some uniformly convex functions are uniformly convex sets, extending the strong convexity results of \citep[Section 5]{garber2015faster}.

\subsection{Uniformly Convex Spaces}\label{ssec:uniform_convexity_space}

The uniform convexity of norm balls (Definition \ref{def:set_uniform_convexity}) is closely related to the uniform convexity of normed spaces \citep{polyak1966existence,balashov2011uniformly,lindenstrauss2013classical,weber2013local}. 
Some works establish sharp uniform convexity results for classical normed spaces such as $l_p$, $L_p$ or $C_p$. 
Most of the practical examples of uniformly convex sets are norm balls and are hence tightly linked with uniformly convex spaces. 
The property of these sets has many consequences, \textit{e.g.} \citep{donahue1997rates}. It also relates to concentration inequalities in Banach Spaces \citep{juditsky2008large} and hence implications \citep{ivanov2019approximate} for approximate versions of the Carath\'eodory theorem \citep{combettes2019revisiting}.

\citep{clarkson1936uniformly,boas1940some} define a uniformly convex normed space $(\mathbb{X}, ||\cdot||)$ as a normed space such that, for each $\epsilon > 0$, there is a $\delta >0$ such that if $x$ and $y$ are unit vectors in $\mathbb{X}$ with $||x - y||\geq \epsilon$, then $(x+y)/2$ has norm lesser or equal to $1 - \delta$. Specific quantification of spaces satisfying this property is obtained via the modulus of convexity, a measure of non-linearity of a norm.

\begin{definition}[Modulus of convexity]\label{def:modulus_convexity}
The modulus of convexity of the space $(\mathbb{X}, ||\cdot||)$ is defined as
\BEQ\label{eq:modulus_convexity}
\delta_{\mathbb{X}}(\epsilon) = \text{inf }\Big\{ 1 - \Big|\Big|\frac{x+y}{2}\Big|\Big|~\Big|~||x||\leq  1~,~||y|| \leq 1~,~||x-y|| \geq \epsilon \Big\}.
\EEQ
\end{definition}

A normed space $\mathbb{X}$ is said to be $r$-uniformly convex in the case $\delta_{\mathbb{X}}(\epsilon)\geq C \epsilon^r$.
These specific lower bounds on the modulus of convexity imply that the balls stemming for such spaces are uniformly convex in the sense of Definition \ref{def:set_uniform_convexity}.
There exist sharp results for $L_p$ and $\ell_p$ spaces in \citep{clarkson1936uniformly,hanner1956}.
Matrix spaces with $p$-Schatten norm are known as $C_p$ spaces, and sharp results concerning their uniform convexity can be found in \citep{dixmier1953formes,tomczak1974moduli,simon1979trace,ball1994sharp}.
The following gives a link between the set $\gamma_{\mathcal{C}}$ and space $\delta_{\mathbb{X}}$ modulus of convexity, see proof in Appendix~\ref{ssec:uniformly_convex_spaces}. 

\begin{lemma}\label{lem:link_space_set_UC}
If a normed space $(\mathbb{X}, ||\cdot||)$ is uniformly convex with modulus of convexity $\delta_{\mathbb{X}}(\cdot)$, then its unit norm ball is $\delta_{\mathbb{X}}(\cdot)$ uniformly convex with respect to $||\cdot||$. Note that if the unit ball $B_{||\cdot||}(1)$ is $(\alpha, q)$-uniformly convex, then $B_{||\cdot||}(r)$ is $({\alpha}/{r^{q-1}}, q)$-uniformly convex.
\end{lemma}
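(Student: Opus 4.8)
The plan is to prove the two assertions separately, since the first is substantive and the second is pure homogeneity. For the first claim I would start from Definition~\ref{def:set_uniform_convexity}: I must show that for all $x,y$ with $\|x\|\le 1$, $\|y\|\le 1$, every $\eta\in[0,1]$ and every unit vector $z$, the point $\eta x+(1-\eta)y+\eta(1-\eta)\delta_{\mathbb{X}}(\|x-y\|)z$ lies in the unit ball. By the triangle inequality one has $\|\eta x+(1-\eta)y+\eta(1-\eta)\delta_{\mathbb{X}}(\|x-y\|)z\|\le \|\eta x+(1-\eta)y\|+\eta(1-\eta)\delta_{\mathbb{X}}(\|x-y\|)$, so it suffices to establish the scalar estimate
\[
\|\eta x+(1-\eta)y\|\le 1-\eta(1-\eta)\delta_{\mathbb{X}}(\|x-y\|).
\]
This reduction is the right first move because it eliminates $z$ and isolates the only nontrivial ingredient.

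The key step, and the main obstacle, is to upgrade the \emph{midpoint} inequality built into Definition~\ref{def:modulus_convexity} (which controls only $\|(x+y)/2\|$) to an arbitrary convex combination. I would do this with a concavity argument. Set $g(\eta)\triangleq 1-\|\eta x+(1-\eta)y\|$; since $\eta\mapsto \eta x+(1-\eta)y$ is affine and $\|\cdot\|$ is convex, $g$ is concave on $[0,1]$. The endpoints satisfy $g(0)=1-\|y\|\ge 0$ and $g(1)=1-\|x\|\ge 0$, while the modulus of convexity gives $g(1/2)=1-\|(x+y)/2\|\ge \delta_{\mathbb{X}}(\|x-y\|)$. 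For $\eta\in[0,1/2]$, concavity places $g$ above the chord joining $(0,g(0))$ and $(1/2,g(1/2))$, so $g(\eta)\ge 2\eta\,g(1/2)\ge 2\eta\,\delta_{\mathbb{X}}(\|x-y\|)\ge \eta(1-\eta)\delta_{\mathbb{X}}(\|x-y\|)$; the symmetric chord from the midpoint to $\eta=1$ handles $\eta\in[1/2,1]$. This delivers the required estimate (in fact the stronger bound $g(\eta)\ge 2\min\{\eta,1-\eta\}\,\delta_{\mathbb{X}}(\|x-y\|)$). I would also record that $\delta_{\mathbb{X}}$ is non-decreasing, since the infimum in Definition~\ref{def:modulus_convexity} runs over a shrinking feasible set as $\epsilon$ grows; this is exactly the monotonicity demanded of $\gamma_{\mathcal{C}}$, so $\delta_{\mathbb{X}}$ is an admissible modulus and the unit ball is $\delta_{\mathbb{X}}(\cdot)$-uniformly convex.

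For the second claim I would invoke homogeneity of the norm, writing $B_{\|\cdot\|}(r)=r\,B_{\|\cdot\|}(1)$. Given $x,y\in B_{\|\cdot\|}(r)$, note $x/r,y/r\in B_{\|\cdot\|}(1)$, apply $(\alpha,q)$-uniform convexity of the unit ball to $x/r,y/r$ and an arbitrary unit vector $z$, and then multiply the resulting membership through by $r$. Since $\|x/r-y/r\|^q=\|x-y\|^q/r^q$, the displacement coefficient transforms as $r\cdot\eta(1-\eta)\alpha\|x-y\|^q/r^q=\eta(1-\eta)(\alpha/r^{q-1})\|x-y\|^q$, which is precisely $(\alpha/r^{q-1},q)$-uniform convexity of $B_{\|\cdot\|}(r)$. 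No step here is delicate; it is only the bookkeeping of how the $q$-th power term rescales under dilation of the ball.
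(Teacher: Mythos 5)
Your proof is correct and follows essentially the same route as the paper's: both upgrade the midpoint inequality of Definition~\ref{def:modulus_convexity} to arbitrary convex combinations via convexity, your chord argument on the concave margin $g(\eta)=1-\|\eta x+(1-\eta)y\|$ being the scalar counterpart of the paper's step of writing $\eta x+(1-\eta)y$ as a convex combination of an endpoint with the bulged midpoint $(x+y)/2+\delta_{\mathbb{X}}(\|x-y\|)z\in\mathcal{C}$, with the same harmless loss of a factor since $2\min\{\eta,1-\eta\}\geq\eta(1-\eta)$. As a small bonus, you also verify the monotonicity of $\delta_{\mathbb{X}}$ and write out the homogeneity argument for the rescaling note, both of which the paper's proof leaves implicit.
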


\subsection{Uniform Convexity of Some Classic Norm Balls}\label{ssec:uniform_convexity_classic_balls}

When $p\in]1,2]$, $\ell_p$-balls are strongly convex sets and $((p-1)/2, 2)$-uniformly convex with respect to $||\cdot||_p$, see for instance \citep[Theorem 2]{hanner1956} or \citep[Lemma 4]{garber2015faster}. When $p>2$, the $\ell_p$-balls are $(1/p, p)$-uniformly convex with respect to $||\cdot||_p$ \citep[Theorem 2]{hanner1956}. Uniform convexity also extends the strong convexity of group $\ell_{s,p}$-norms (with $1 < p,s \leq 2$) \citep[\S 5.3. and 5.4.]{garber2015faster} to the general case $p,s > 1$.

\citep{dixmier1953formes,tomczak1974moduli,simon1979trace,ball1994sharp} focus of the uniform convexity of the $(C_p, ||\cdot||_{S(p)})$ spaces, \textit{i.e.} spaces of matrix where the norm is the $\ell_p$-norm of a matrix singular values . Their unit balls are hence the $p$-Schatten balls. For $p\in]1,2]$, $p$-Schatten balls are $((p-1)/2, 2)$-uniformly convex with respect to $||\cdot||_{S(p)}$, see \citep[Lemma 6]{garber2015faster} and the sharp results of \citep{ball1994sharp}. For the case $p > 2$, \citep{dixmier1953formes} showed that the $p$-Schatten balls are $(1/p, p)$-uniformly convex with respect to $||\cdot||_{S(p)}$, see also \citep[\S III]{ball1994sharp}.

\section{Numerical Illustration}\label{sec:numerical_examples}
Uniform convexity is a global assumption. Hence, in Theorem \ref{th:uniformly_cvx_set_rates_general}, we obtain sublinear convergence that do not depend on the specific location of the solution $x^*\in\partial\mathcal{C}$. 
However, some regions of $\mathcal{C}$ might be relatively more curved than others and hence exhibit faster convergence rates. This effect is quantified in Theorem~\ref{th:uniformly_cvx_set_rates_general_local} when a local scaling inequality holds.

In Figure \ref{fig:exp_quadratic_lp}, in the case of the $\ell_p$-balls with $p>2$, we vary the approximate location of the optimum $x^*$ in the boundary of the $\ell_p$-balls. 

Subfigures \eqref{fig:opt_curved_deter}, \eqref{fig:opt_curved_short_step}, and \eqref{fig:opt_curved_exact} are associated to an optimization problem where the solution $x^*$ of \eqref{eq:opt_problem} is near the intersection of the $\ell_p$-balls and the half-line generated by $\sum_{i=1}^{d}e_i$ (where the $(e_i)$ is the canonical basis), \textit{i.e.} in \textit{curved} regions of the boundaries of the $\ell_p$-balls.

Subfigures \eqref{fig:opt_flat_deter}, \eqref{fig:opt_flat_short_step}, and \eqref{fig:opt_flat_exact} corresponds to the same optimization problem where the solution $x^*$ to \eqref{eq:opt_problem} is close to the intersection between the half-line generated by $e_1$ and the boundary of the $\ell_p$-balls, \textit{i.e.} in \textit{flat} regions of the boundaries of the $\ell_p$-balls.

We observe that when the optimum is at a \textit{curved} location, the convergence is quickly linear for $p$ sufficiently close to $2$ and appropriate line-search (see Subfigures \eqref{fig:opt_curved_short_step} and \eqref{fig:opt_curved_exact}).
However, when the optimum is near the \textit{flat} location, we indeed observe sublinear convergence rates (see Subfigures \eqref{fig:opt_flat_short_step} and \eqref{fig:opt_flat_exact}). 
It still becomes linear for $p=2.1$ with exact line-search in Subfigure \eqref{fig:opt_flat_exact}.

Also, Theorem \ref{th:uniformly_cvx_set_rates_general} gives accelerated rates when using the Frank-Wolfe algorithm with exact line-search or short step. In Subfigures \eqref{fig:opt_curved_deter} and \eqref{fig:opt_flat_deter}, we show examples of the convergence of the Frank-Wolfe algorithm when using deterministic line-search. The rates are indeed sublinear in $\mathcal{O}(1/T)$. In other words, deterministic line-search generally do not lead to accelerated convergence rates when the sets are uniformly convex. 

\begin{figure}[h]
\subfloat[\label{fig:opt_curved_deter}]{{\includegraphics[width=0.32\linewidth]{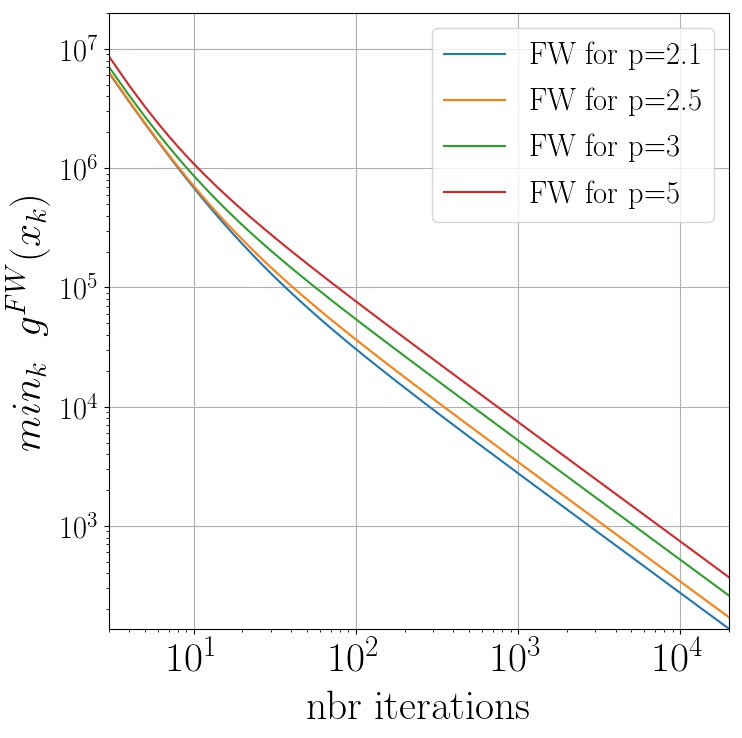} }}
\subfloat[\label{fig:opt_curved_short_step}]{{\includegraphics[width=0.32\linewidth]{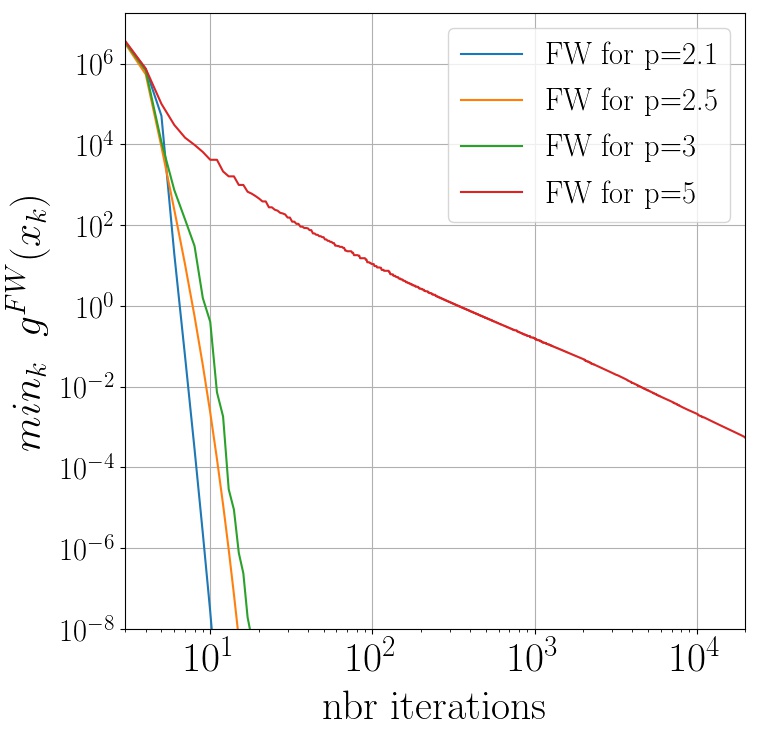} }}
\subfloat[\label{fig:opt_curved_exact}]{{\includegraphics[width=0.32\linewidth]{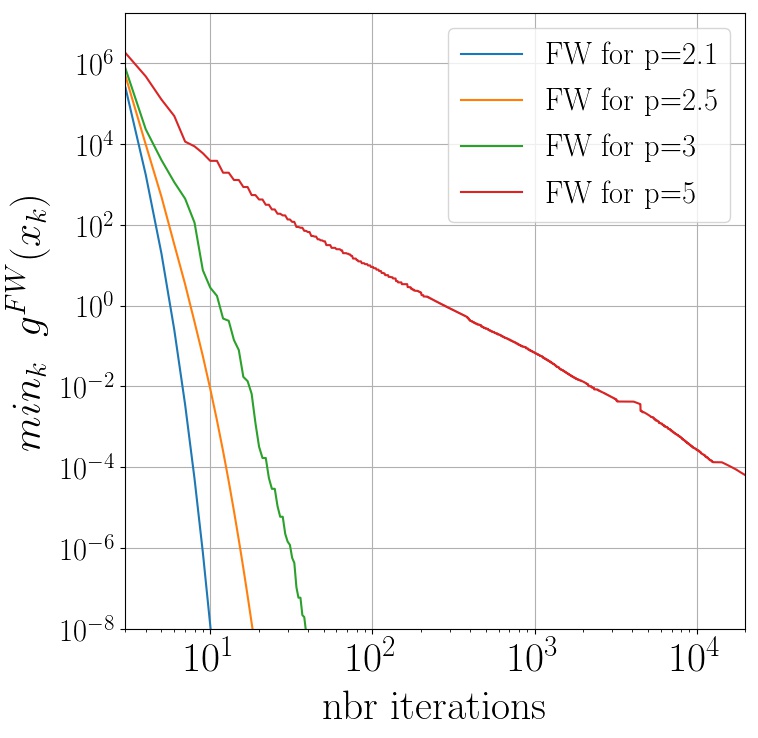} }}

\subfloat[\label{fig:opt_flat_deter}]{{\includegraphics[width=0.32\linewidth]{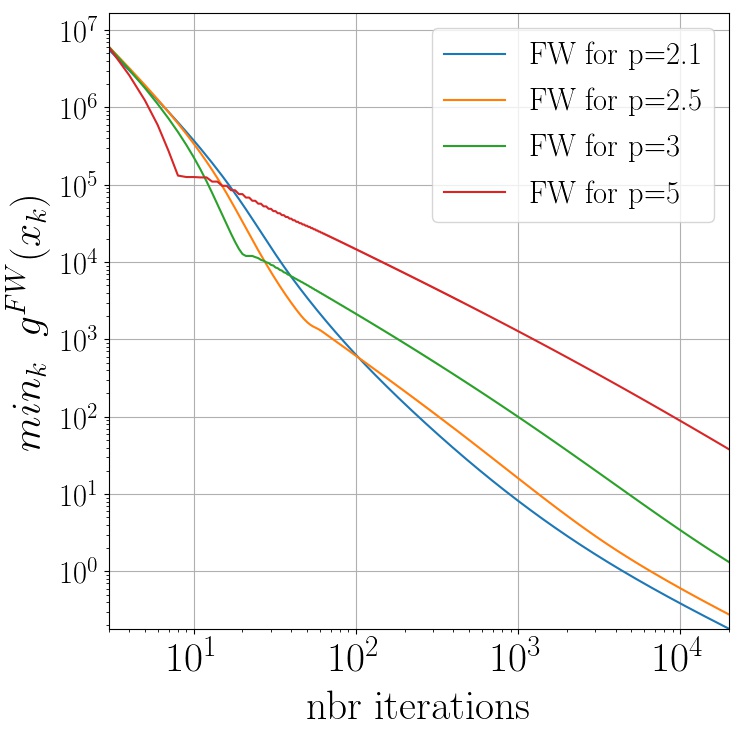} }}
\subfloat[\label{fig:opt_flat_short_step}]{{\includegraphics[width=0.32\linewidth]{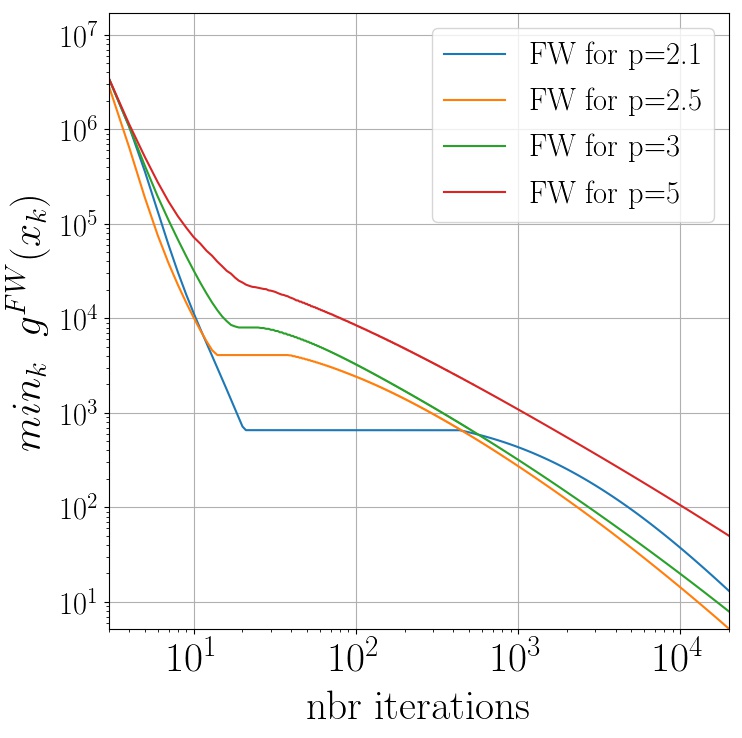}}}
\subfloat[\label{fig:opt_flat_exact}]{{\includegraphics[width=0.32\linewidth]{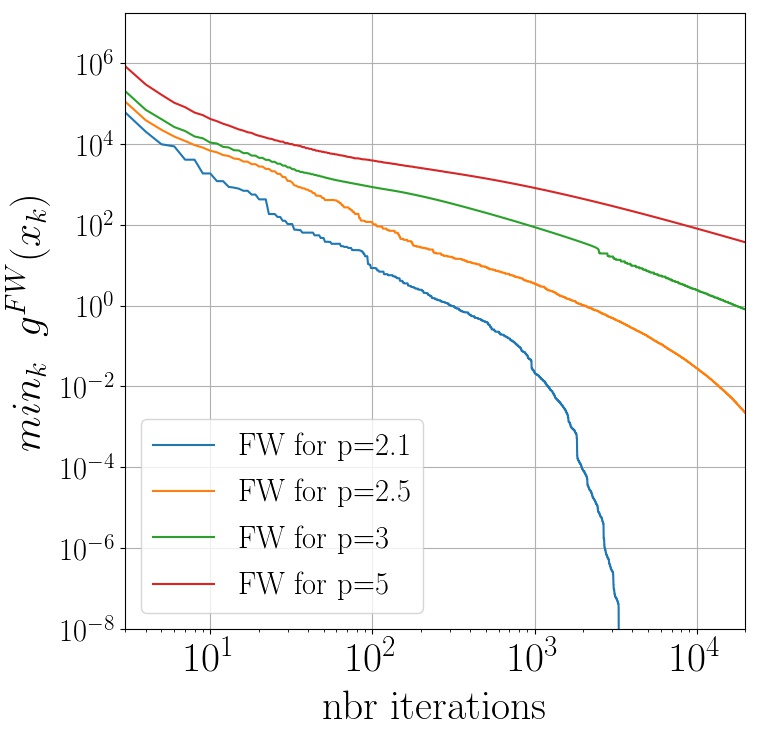} }}
\caption{Solving \eqref{eq:opt_problem} with the Frank-Wolfe algorithm where $f$ is a quadratic with condition number $100$ and the constraint sets are various $\ell_p$-balls of radius $5$. 
We vary $p$ so that all balls are uniformly convex but not strongly-convex. 
We vary the position of the solution to \eqref{eq:opt_problem} with respect to the boundaries of the constraints sets. 
On the first row, we choose the constrained optimum close to the intersection of the set boundary and the line generated by $\sum_i e_i$ (where the $e_i$ form the canonical basis), where $\ell_p$-balls are typically more \textit{curved}. 
On the second row, we choose the constrained optimum near the intersection between the set boundary and the line generate by $e_1$, a region where the $\ell_p$-balls are \textit{flat}. 
On a line, each plot exhibits the behavior of the Frank-Wolfe algorithm iterates with different step size strategy: deterministic line-search (i.e. $1/(k+1)$), short step and exact line-search.
To avoid the oscillating behavior of Frank-Wolfe gap, the $y$-axis represents $\min_{k=1,\ldots,T}{g(x_k)}$ where $g(\cdot)$ is the Frank-Wolfe gap and $T$ the number of iterations.}\label{fig:exp_quadratic_lp}
\end{figure}

\section{Conclusion}
Our results fill the gap between known convergence rates for the Frank Wolfe algorithm. Qualitatively, they also mean that it is the \textit{curvature} of the constraint set that accelerates the convergence of the Frank-Wolfe algorithm, not just strong-convexity. This emphasis on curvature echoes works in other settings \citep{huang2016following}. For the sake of theory, the results could be immediately refined by measuring the \textit{local} curvature of convex bodies with more sophisticated tools than uniform convexity \citep{schneider2015curvatures}.

From a more practical perspective, uniform convexity encompasses ubiquitous structures of constraint sets appearing in machine learning and signal processing. In applications where the (e.g. regularization) constraints are likely to be active, the assumption that $\text{inf}_{x\in\mathcal{C}}||\nabla f(x)||_* > 0$ is not restrictive and the value of $c$ quantifies the relevance of the constraints.


\section*{Acknowledgements}
TK would like to thanks Pierre-Cyril Aubin for very interesting discussions on Banach Spaces, which contributed is the motivation for studying the convergence rates of projection-free methods with uniform convexity assumptions. TK acknowledges funding from the CFM-ENS chaire {\em les mod\`eles et sciences des donn\'ees}. AA is at CNRS \& d\'epartement d'informatique, \'Ecole normale sup\'erieure, UMR CNRS 8548, 45 rue d'Ulm 75005 Paris, France,  INRIA  and  PSL  Research  University. AA acknowledges support from the ML  \& Optimisation joint research initiative with the fonds AXA pour la recherche and Kamet Ventures, as well as a Google focused award.


\newpage

\appendix

\section{Recursive Lemma}\label{app:technical_lemma}
The proofs of Theorems~\ref{th:uniformly_cvx_set_rates_general}, \ref{th:uniformly_cvx_set_rates_general_local}, and \ref{th:rates_uniform_convexity} involve finding explicit bounds for sequences $(h_t)$ satisfying recursive inequalities of the form,
\BEQ\label{eq:general_recursive_equation}
h_{t+1} \leq h_t \cdot \text{max}\{1/2, 1 - C h_t^{\eta} \}.
\EEQ
with $\eta<1$. An explicit solution with $\eta=1/2$ is given in \citep{garber2015faster} and corresponds to $h_t=\mathcal{O}(1/T^2)$, while for $\eta=1$ we recover the classical sublinear Frank-Wolfe regime of $\mathcal{O}(1/T)$. For a $\eta\in]0,1]$, we have $\mathcal{O}(1/T^{1/\eta})$ (see for instance \citep{temlyakov2011greedy} or \citep[Lemma 4.2.]{nguyen2017greedy}), which can be guessed via $h(t)=(C \eta)^{1/\eta} t^{-1/\eta}$ the solution of the differential equation $h^\prime(t)=-C h(t)^{\eta+1}$ for $t>0$.
A quantitative statement is, for instance, given in \citep[proof of Theorem 1.]{YiAdapativeFW} that we reproduce here. 

\begin{lemma}[Recurrence and sub-linear rates]\label{lem:general_recursive_equation}
Consider a sequence $(h_t)_{t\in\mathbb{N}}$ of non-negative numbers satisfying \eqref{eq:general_recursive_equation} with $0 < \eta \leq 1$, then $h_T = \mathcal{O}\big(1/T^{1/\eta} \big)$. More precisely for all $t\geq 0$,
\[
h_t \leq \frac{M}{(t+k)^{1/\eta}}
\]
with $k \triangleq (2 - 2^{\eta})/({2^\eta -1})$ and $ M \triangleq \text{max}\{h_0 k^{1/\eta}, 2/((\eta-(1-\eta)(2^\eta -1))C)^{1/\eta}\}$.
\end{lemma}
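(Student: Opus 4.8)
The statement is a purely analytic claim about any non-negative sequence obeying the one-step recursion \eqref{eq:general_recursive_equation}, so I would prove it by induction on $t$, showing $h_t \le \phi(t)$ for the explicit envelope $\phi(t) \triangleq M(t+k)^{-1/\eta}$. The clean way to organize the induction is to observe that the update map $R(x) \triangleq x\,\max\{1/2,\,1 - Cx^{\eta}\}$ is non-decreasing on $[0,\infty)$: for $x$ below the threshold $\tau \triangleq (1/(2C))^{1/\eta}$ it equals $x - Cx^{1+\eta}$, whose derivative $1 - (1+\eta)Cx^{\eta}$ is non-negative up to $\tau$ (since $C\tau^\eta = 1/2$ and $\eta \le 1$ force $R'(\tau)=(1-\eta)/2\ge 0$, and $R'$ is decreasing), while above $\tau$ it equals $x/2$, and the two pieces agree at $\tau$. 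Monotonicity of $R$ turns the inductive step into a single \emph{supersolution} check: given $h_t \le \phi(t)$ we get $h_{t+1}\le R(h_t)\le R(\phi(t))$, so it suffices to verify $R(\phi(t)) \le \phi(t+1)$ for every $t$.

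Since $R(\phi(t)) = \phi(t)\max\{1/2,\, 1 - C\phi(t)^\eta\}$, the supersolution inequality splits into two requirements. The first is the \emph{halving} bound $\phi(t+1) \ge \tfrac12\phi(t)$; raising to the power $\eta$ this is exactly $2^\eta (t+k) \ge t+1+k$, i.e.\ $(2^\eta-1)(t+k)\ge 1$, and this is precisely what the definition $k = (2-2^\eta)/(2^\eta-1)$ is engineered to deliver. The second is the \emph{contraction} bound $\phi(t+1)\ge \phi(t)\bigl(1 - C\phi(t)^\eta\bigr)$; writing $s \triangleq t+k$ and using $\phi(t)^\eta = M^\eta s^{-1}$ this reads $(1+1/s)^{-1/\eta}\ge 1 - CM^\eta/s$. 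I would bound the left-hand side from below by the tangent/Bernoulli inequality $(1+u)^{-1/\eta}\ge 1 - u/\eta$ (valid for all $u\ge 0$ by convexity of $u\mapsto(1+u)^{-1/\eta}$), which reduces the requirement to $CM^\eta \ge 1/\eta$, i.e.\ a lower bound on $M$ of the order $(\eta C)^{-1/\eta}$; the precise constant $2/((\eta-(1-\eta)(2^\eta-1))C)^{1/\eta}$ in the statement comes from carrying this estimate uniformly down to the smallest index $s=k$ with enough slack, rather than only in the asymptotic regime.

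The base case $t=0$ is immediate from the first term $h_0 k^{1/\eta}$ in the definition of $M$, which forces $\phi(0)=Mk^{-1/\eta}\ge h_0$. The delicate point — and the step I expect to be the main obstacle — is reconciling the two regimes at \emph{small} $t$: the halving inequality $(2^\eta-1)(t+k)\ge 1$ is tight and only has room to spare once $t\ge 1$, so the initial (linearly contracting) burn-in phase, during which $h_t$ may still exceed $\tau$, has to be absorbed into the constants. This is exactly why $M$ carries an extra factor and why the theorems invoking this lemma describe a brief linear phase before the $\mathcal{O}(1/T^{1/\eta})$ rate sets in. Once both inequalities are verified with the stated pair $(k,M)$, the induction closes and yields $h_t \le M(t+k)^{-1/\eta}$; letting $\eta\to 1$ gives $k\to 0$ and the classical $\mathcal{O}(1/T)$ bound, while $\eta=1/2$ reproduces the $\mathcal{O}(1/T^2)$ constant of \citep{garber2015faster}.
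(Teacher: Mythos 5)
Your architecture is the natural one for this statement, and most of it checks out: the monotonicity of the update map $R(x)=x\max\{1/2,\,1-Cx^{\eta}\}$ on $[0,\infty)$ for $\eta\le 1$ is verified exactly as you say, the reduction of the induction to the supersolution check $R(\phi(t))\le\phi(t+1)$ is valid, the base case follows from the $h_0k^{1/\eta}$ term in $M$, and your Bernoulli bound $(1+1/s)^{-1/\eta}\ge 1-1/(\eta s)$ correctly reduces the contraction branch to $CM^{\eta}\ge 1/\eta$ uniformly in $s$ — a condition the stated $M$ does satisfy, since $CM^{\eta}\ge 2^{\eta}/(\eta-(1-\eta)(2^{\eta}-1))\ge 1/\eta$ reduces to $2^{\eta}\ge 1$. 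For calibration: the paper gives no proof of this lemma at all — it reproduces the constants from the proof of Theorem~1 of \citep{YiAdapativeFW} and offers only the ODE heuristic — so yours is the only argument on the table, and it is the right kind of argument.

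The genuine gap is the point you flagged and then waved away: the halving branch at $t=0$. You correctly derive that it requires $(2^{\eta}-1)(t+k)\ge 1$, but with $k=(2-2^{\eta})/(2^{\eta}-1)$ one has $(2^{\eta}-1)k=2-2^{\eta}<1$ for every $\eta\in\,]0,1]$, with the inequality holding (with equality) only from $t=1$ on. Your assertion that this burn-in is ``absorbed into the constants'' cannot be made good: with the constants exactly as stated, the \emph{conclusion} itself can fail at $t=1$, so no amount of slack in $M$ closes the step $t=0\to t=1$. Concretely, take $\eta=1/2$, $C=1$, $h_0=100$: since $1-Ch_0^{1/2}=-9<1/2$, the recursion permits $h_1=h_0/2=50$; meanwhile $k=\sqrt{2}$, $M=\max\{h_0k^2,\,2/(1/2-(\sqrt{2}-1)/2)^2\}=\max\{200,\,23.4\}=200$, and $M/(1+k)^{1/\eta}=200/(3+2\sqrt{2})\approx 34.3<50$. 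The constants as reproduced thus carry an off-by-one (plausibly an indexing shift relative to the source, where the relevant sequence effectively starts at $t=1$), and the repair is exactly what your own computation suggests: replace $k$ by $k+1=1/(2^{\eta}-1)$ (and $k^{1/\eta}$ by $(k+1)^{1/\eta}$ in the first term of $M$); then $(2^{\eta}-1)(t+k+1)\ge 1$ for all $t\ge 0$, the contraction branch and base case are untouched, and your induction closes verbatim. None of this affects the $\mathcal{O}(1/T^{1/\eta})$ conclusions of Theorems~\ref{th:uniformly_cvx_set_rates_general}, \ref{th:uniformly_cvx_set_rates_general_local} and \ref{th:rates_uniform_convexity}, but as a proof of the lemma with the stated pair $(k,M)$ your inductive step fails at $t=0$, and necessarily so.
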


\section{Beyond Local Uniform Convexity}

Here we show that additional convexity properties on the gauge function of $\mathcal{C}$ imply local scaling inequalities on $\mathcal{C}$. Note that for ease, we assume that the gauge function is differential at $x^*$ which is not necessarily the case case when the set $\mathcal{C}$ is uniformly convex.

\begin{lemma}\label{lem:UC_gauge_to_scaling}
Consider a compact convex set $\mathcal{C}$ with $0$ in its interior.
Assume the gauge function of $\mathcal{C}$ is differentiable and normal cone at the boundary are half-lines. Assume $(\mu, r)$-uniformly convex at $x^*$ a solution of \eqref{eq:opt_problem} (where $f$ is a convex $L$-smooth function and $\text{inf}_{x\in\mathcal{C}} ||x||_{\mathcal{C}} > 0$), then we have the following scaling inequality for all $x\in\mathcal{C}$
\[
\langle -\nabla f(x^*); x - x^*\rangle \geq \frac{\mu}{||g||} ||\nabla f(x^*)|| ||x - x^*||^q,
\]
where $g\in N_{\mathcal{C}}(x^*)$ and $\langle g; x^*\rangle=1$.
\end{lemma}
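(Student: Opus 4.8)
The plan is to combine the first-order optimality condition at $x^*$ with the homogeneity, differentiability, and uniform convexity of the gauge $||\cdot||_{\mathcal{C}}$. First I would observe that since $\text{inf}_{x\in\mathcal{C}}||\nabla f(x)|| > 0$ forces $\nabla f(x^*)\neq 0$, the point $x^*$ cannot be an interior minimizer and hence $x^*\in\partial\mathcal{C}$, so $||x^*||_{\mathcal{C}}=1$. Because the gauge is positively homogeneous of degree one, Euler's identity applied at $x^*$ yields $\langle g; x^*\rangle = ||x^*||_{\mathcal{C}} = 1$ for $g\triangleq \nabla ||x^*||_{\mathcal{C}}$, which is exactly the normalization $\langle g; x^*\rangle = 1$ in the statement. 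Moreover $g$ is an outward normal to the level set $\{\,||\cdot||_{\mathcal{C}}=1\,\}=\partial\mathcal{C}$ at $x^*$, so $g\in N_{\mathcal{C}}(x^*)$.

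Next I would use that $x^*$ solves \eqref{eq:opt_problem}, i.e.\ $-\nabla f(x^*)\in N_{\mathcal{C}}(x^*)$. Since $||\cdot||_{\mathcal{C}}$ is differentiable at $x^*$ the supporting hyperplane there is unique, so the normal cone is the half-line $N_{\mathcal{C}}(x^*)=\mathbb{R}_+\, g$ (this is also the standing hypothesis). Consequently $-\nabla f(x^*)=\lambda g$ for some $\lambda\geq 0$, and taking norms on both sides gives $\lambda = ||\nabla f(x^*)||/||g|| > 0$.

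The core estimate then comes from the $(\mu,q)$-uniform convexity of the gauge at $x^*$ (Definition~\ref{def:fct_uniform_convexity}), written in its first-order form, which is available since $||\cdot||_{\mathcal{C}}$ is differentiable there:
\[
||x||_{\mathcal{C}} \;\geq\; ||x^*||_{\mathcal{C}} + \langle g;\, x - x^*\rangle + \tfrac{\mu}{2}\,||x-x^*||^q .
\]
For any feasible $x\in\mathcal{C}$ we have $||x||_{\mathcal{C}}\leq 1 = ||x^*||_{\mathcal{C}}$, so this collapses to $\langle g;\, x^*-x\rangle \geq \tfrac{\mu}{2}\,||x-x^*||^q$. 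Multiplying by $\lambda>0$ and substituting $\lambda g = -\nabla f(x^*)$ together with $\lambda = ||\nabla f(x^*)||/||g||$ yields
\[
\langle -\nabla f(x^*);\, x^*-x\rangle \;\geq\; \frac{\mu}{||g||}\,||\nabla f(x^*)||\;||x-x^*||^q ,
\]
which is the asserted scaling inequality and thus matches \eqref{eq:local_scaling_inequality} at $x^*$ (up to the orientation $x^*-x$, which is the sign forced by optimality, and an immaterial constant depending on the normalization of uniform convexity).

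The main obstacle is the geometric identification in the first two steps: justifying that the gradient $g=\nabla||x^*||_{\mathcal{C}}$ of the differentiable gauge coincides, up to positive scaling, with the unique generator of $N_{\mathcal{C}}(x^*)$, and that Euler's identity fixes its normalization at $\langle g; x^*\rangle = 1$. Everything after that is a one-line consequence of feasibility ($||x||_{\mathcal{C}}\le 1$) and the first-order uniform convexity inequality. I would also remark that differentiability of the gauge already forces the normal cone to be a half-line, so the half-line hypothesis is really used only to write $N_{\mathcal{C}}(x^*)=\mathbb{R}_+\,g$ with $g$ explicitly given by the gauge gradient.
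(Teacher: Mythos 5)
Your proof is correct and follows essentially the same route as the paper's: the first-order inequality from uniform convexity of the gauge at $x^*$, feasibility $||x||_{\mathcal{C}}\leq 1$ collapsing it to $\langle g; x^*-x\rangle \gtrsim ||x-x^*||^q$, and the half-line normal cone identifying $-\nabla f(x^*)=\frac{||\nabla f(x^*)||}{||g||}\,g$. Your additions are sound refinements rather than a different argument: you justify $\langle g; x^*\rangle=1$ via Euler's identity where the paper cites \citep[(1.39)]{schneider2014convex}, you correctly orient the inequality as $\langle -\nabla f(x^*); x^*-x\rangle$ (the paper's final display has a sign typo relative to its own derivation and to \eqref{eq:local_scaling_inequality}), and your remark that differentiability of the gauge already makes the normal cone a half-line is accurate.
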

\begin{proof}
We have $x^*\in\partial\mathcal{C}$. Write $g = \nabla ||x||_{\mathcal{C}}$. Then by $(\mu, r)$-uniformly convex of the gauge function we have
\[
||x||_{\mathcal{C}} \geq \underbrace{||x^*||_{\mathcal{C}}}_{=1} + \langle g; x - x^*\rangle + \mu ||x - x^*||^q.
\]
Hence we have
\[
\langle -g; x - x^*\rangle   \geq \underbrace{1 - ||x||_{\mathcal{C}}}_{\geq 0} + \mu ||x - x^*||^q \geq \mu ||x - x^*||^q.
\]
When it is differentiable, \citep[(1.39)]{schneider2014convex} show that $g$ satisfies $g\in N_{\mathcal{C}}(x^*)$ and $\langle g; x^*\rangle = 1$. Here, the normal cone is a half-line and $-\nabla f(x^*)\in N_{\mathcal{C}}(x^*)$. In particular then $-\nabla f(x^*) = \frac{||\nabla f(x^*)||}{||g||} g$. Finally
\[
\langle -\nabla f(x^*); x - x^*\rangle \geq \frac{\mu}{||g||} ||x - x^*||^q ||\nabla f(x^*)||.
\]
\end{proof}

\section{Proofs in Online Optimization}\label{app:online}

The following is the generalization of \citep[(6)]{huang2017following} when the set is uniformly convex. Note that in our version $\mathcal{C}$ can be uniformly convex with respect to any norm.

\begin{lemma}\label{lem:scaling_UC_online}
Assume $\mathcal{C}\subset\mathbb{R}^d$ is a $(\alpha, q)$-uniformly convex set with respect to $||\cdot||$, with $\alpha > 0$ and $q\geq 2$. Consider the non-zero vectors $\phi_1, \phi_2\in\mathbb{R}^d$ and $v_{\phi_1}\in\argmax_{v\in\mathcal{C}}{\langle \phi; v \rangle}$ and $v_{\phi_2}\in\argmax_{v\in\mathcal{C}}{\langle \phi; v \rangle}$. Then
\BEQ\label{eq:scaling_UC_online}
\langle v_{\phi_1} - v_{\phi_2}; \phi_1 \rangle \leq \Big(\frac{1}{\alpha}\Big)^{1/(q-1)} \frac{||\phi_1 -\phi_2||_*^{1+ 1/(q-1)}}{(\text{max}\{||\phi_1||_*, ||\phi_2||_*\})^{1/(q-1)}},
\EEQ
where $||\cdot||_*$ is the dual norm to $||\cdot||$.
\end{lemma}
\begin{proof}
By definition of uniform convexity, for any $z$ of unit norm, $v_{\gamma}(z)\in\mathcal{C}$ where
\[
v_{\gamma}(z) = \gamma v_{\phi_1} + (1 - \gamma) v_{\phi_2} + \gamma (1 - \gamma) \alpha ||v_{\phi_1} - v_{\phi_2}||^q z.
\]
By optimality of $v_{\phi_1}$ and $v_{\phi_2}$, we have $\langle v_{\gamma}(z); \phi_1\rangle \leq \langle v_1; \phi_1\rangle$ and $\langle v_{\gamma}(z); \phi_2\rangle \leq \langle v_2; \phi_2\rangle$, so that
\[
\langle v_{\gamma}(z) ; \gamma \phi_1 + (1-\gamma) \phi_2 \rangle \leq \gamma \langle v_1 ; \phi_1 \rangle + (1 - \gamma ) \langle v_2 ; \phi_2 \rangle.
\]
Write $\phi_{\gamma} = \gamma \phi_1 + (1-\gamma) \phi_2$. Then, when developing the left hand side, we get
\[
\gamma (1 - \gamma) \alpha ||v_{\phi_1} - v_{\phi_2}||^q \langle z; \phi_{\gamma}\rangle \leq \gamma (1 - \gamma) \langle v_{\phi_1} - v_{\phi_2}; \phi_1 - \phi_2\rangle 
\]
Choosing the best $z$ of unit norm we get
\[
\alpha ||v_{\phi_1} - v_{\phi_2}||^q || \phi_{\gamma}||_{*} \leq \langle v_{\phi_1} - v_{\phi_2}; \phi_1 - \phi_2\rangle
\]
and for $\gamma=0$ and $\gamma=1$ and via generalized Cauchy-Schwartz we get
\[
\alpha ||v_{\phi_1} - v_{\phi_2}||^q \cdot\text{max}\{||\phi_1||_*,||\phi_2||_*\} \leq ||v_{\phi_1} - v_{\phi_2}|| \cdot ||\phi_1 - \phi_2||_*.
\]
Then,
\[
\langle v_{\phi_1} - v_{\phi_2}; \phi_1 \rangle \leq ||v_{\phi_1} - v_{\phi_2}|| \cdot ||\phi_1 - \phi_2||_* \leq \Big(\frac{1}{\alpha}\Big)^{1/(q-1)}  \frac{||\phi_1 -\phi_2||_*^{1 +  1/(q-1)}}{(\text{max}\{||\phi_1||_*, ||\phi_2||_*\})^{1/(q-1)}},
\]
and we finally obtain \eqref{eq:scaling_UC_online}.
\end{proof}

\section{Uniformly Convex Objects}

\subsection{Uniformly Convex Spaces}\label{ssec:uniformly_convex_spaces}

\begin{proof}[Proof of Lemma~\ref{lem:link_space_set_UC}]
Assume $(\mathbb{X}, ||\cdot||)$ is uniformly convex with modulus of convexity $\delta(\cdot)$. Then for any $(x,y,z)\in B_{||\cdot||}(1)$, we have by definition $1 - \frac{||x + y||}{2} \geq \delta(||x-y||)$ and then 
\[
\Big|\Big|\frac{x+y}{2} + \delta(||x-y||) z\Big|\Big| \leq \Big|\Big|\frac{x+y}{2}\Big|\Big| + \delta(||x-y||) \leq 1~.
\]
Hence, $\frac{x+y}{2} + \delta(||x-y||) z\in\mathcal{C}$. 
Without loss of generality, consider $\eta\in]0;1/2]$. 
We need to show that $\eta x + (1 - \eta) y + \delta(||x-y||)z\in\mathcal{C}$ for any $z$ with norm lesser than $1$.
First, note that $\eta x + (1 - \eta) y = (1 - 2\eta) y + (2 \eta) (x+y)/2$. Note also that because $1 - 2\eta\in[0,1]$, we have for any $z$ of norm lesser than $1$
\[
(1 - 2\eta) x + (2 \eta) \big[ (x+y)/2 + \delta(||x-y||) z\big] \in \mathcal{C}.
\]
Hence, for any $z$ of norm lesser than $1$, we have
\[
\eta x + (1 - \eta) y + 2 \eta \delta(||x-y||) z \in\mathcal{C}.
\]
Or equivalently
\[
\eta x + (1 - \eta) y + (1-\eta) \eta \delta(||x-y||) \frac{2\eta}{(1-\eta) \eta} z \in\mathcal{C}.
\]
Because $\frac{2\eta}{(1-\eta) \eta} \geq 1$, it follows that for any $z$ of norm lesser than $1$ we have 
\[
\eta x + (1 - \eta) y + (1-\eta) \eta \delta(||x-y||) z \in\mathcal{C},
\]
which conclude on the uniform convexity of the norm ball.
\end{proof}

\subsection{Uniformly Convex Functions}\label{ssec:uniform_convexity_functions}
Uniform convexity is also a property of convex functions and defined as follows.

\begin{definition}\label{def:fct_uniform_convexity}
A differentiable function $f$ is $(\mu, r)$-uniformly convex on a convex set $\mathcal{C}$ if there exists $r\geq 2$ and $\mu>0$ such that for all $(x,y)\in\mathcal{C}$
\[
f(y) \geq f(x) + \langle \nabla f(x); y - x\rangle + \frac{\mu}{2} ||x-y||_2^r~.
\]
\end{definition}
We now state the equivalent of \citep[Theorem 12]{journee2010generalized} for the level sets of uniformly convex functions. This was already used in \citep{garber2015faster} in the case of strongly-convex sets.

\begin{lemma}\label{lem:level_set_uni_smooth}
Let $f:\mathbb{R}^d\rightarrow\mathbb{R}^+$ be a non-negative, $L$-smooth and $(\mu, r)$-uniformly convex function on $\mathbb{R}^d$, with $r\geq 2$. Then for any $w>0$, the set
\[
\mathcal{L}_{w} = \Big\{x~|~ f(x)\leq w \Big\}~,
\]
is $(\alpha, r)$-uniformly convex with $\alpha=\frac{\mu}{\sqrt{2 w L}}$.
\end{lemma}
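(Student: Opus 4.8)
The plan is to verify the defining inclusion of set uniform convexity directly. Fix $x,y\in\mathcal{L}_w$, a weight $\eta\in[0,1]$ and a unit vector $z$, and write $u=\eta x+(1-\eta)y$ for the convex combination and $\delta=\eta(1-\eta)\,\alpha\,\|x-y\|_2^{\,r}\,z$ for the prescribed perturbation. It suffices to show $f(u+\delta)\le w$, which places $u+\delta$ in $\mathcal{L}_w$. The whole argument lives in the Euclidean norm, since both the $L$-smoothness and the $(\mu,r)$-uniform convexity of $f$ are stated with respect to $\|\cdot\|_2$; the resulting uniform convexity of $\mathcal{L}_w$ is then with respect to $\|\cdot\|_2$ as well.

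First I would quantify how far inside the level set the convex combination $u$ sits. Applying the gradient form of $(\mu,r)$-uniform convexity (Definition~\ref{def:fct_uniform_convexity}) with base point $u$ to both $x$ and $y$, scaling by $\eta$ and $1-\eta$ and summing, the first-order terms cancel because $\eta(x-u)+(1-\eta)(y-u)=0$, and using $\|x-u\|_2=(1-\eta)\|x-y\|_2$ and $\|y-u\|_2=\eta\|x-y\|_2$ one obtains the strengthened Jensen inequality
\[
f(u)\le \eta f(x)+(1-\eta)f(y)-\tfrac{\mu}{2}\,\eta(1-\eta)\big((1-\eta)^{r-1}+\eta^{r-1}\big)\|x-y\|_2^{\,r}.
\]
Since $f(x),f(y)\le w$, this gives $f(u)\le w-B$ with explicit margin $B=\tfrac{\mu}{2}\,\eta(1-\eta)\big((1-\eta)^{r-1}+\eta^{r-1}\big)\|x-y\|_2^{\,r}\ge 0$.

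Next I would bound the growth of $f$ on the ball of radius $\|\delta\|_2$ around $u$. By $L$-smoothness, any $v$ with $\|v-u\|_2=\rho$ obeys $f(v)\le f(u)+\|\nabla f(u)\|_2\,\rho+\tfrac{L}{2}\rho^2$, and because $f$ is non-negative and $L$-smooth the standard inequality $\|\nabla f(u)\|_2^2\le 2Lf(u)\le 2L(w-B)$ holds. Substituting and viewing the bound as a quadratic in $\rho$, the requirement $f(v)\le w$ is met for every $\rho$ up to the positive root, i.e. whenever $\rho\le \sqrt{2/L}\,(\sqrt{w}-\sqrt{w-B})$; the elementary estimate $\sqrt{w}-\sqrt{w-B}\ge B/(2\sqrt{w})$ shows that $\rho\le B/\sqrt{2Lw}$ is enough. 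It then remains to check that the genuine perturbation radius $\|\delta\|_2=\eta(1-\eta)\,\alpha\,\|x-y\|_2^{\,r}$ satisfies this: substituting $B$ and cancelling the common factor $\eta(1-\eta)\|x-y\|_2^{\,r}$ turns the entire claim into the scalar condition $\alpha\le \tfrac{\mu}{2\sqrt{2Lw}}\big((1-\eta)^{r-1}+\eta^{r-1}\big)$, whose worst case over $\eta\in[0,1]$ fixes an admissible $\alpha$ of the stated order $\mu/\sqrt{wL}$.

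I expect the crux to be step three: obtaining a control of the first-order increase $\|\nabla f(u)\|_2\|\delta\|_2$ that is uniform over all base points $u$ and all perturbation directions $z$, and ensuring the convexity margin $B$ dominates it. This is exactly where non-negativity of $f$ enters, via $\|\nabla f(u)\|_2^2\le 2Lf(u)$, which converts the level constraint $f(u)\le w$ into the gradient bound $\|\nabla f(u)\|_2\le\sqrt{2Lw}$. The quadratic term $\tfrac{L}{2}\|\delta\|_2^2$ is not a real obstacle, since the safety-radius computation absorbs it through the two-sided bound on $\sqrt{w}-\sqrt{w-B}$; the only remaining bookkeeping is tracking the factor $(1-\eta)^{r-1}+\eta^{r-1}$ and minimising it over $\eta$ to pin down the constant.
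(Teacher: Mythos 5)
Your proof is correct and follows essentially the same route as the paper's: a strengthened Jensen inequality from the uniform convexity of $f$ creates a margin $B$ inside the level set, non-negativity plus $L$-smoothness gives $\|\nabla f(u)\|_2^2 \le 2Lf(u)$, and the estimate $\sqrt{w}-\sqrt{w-B}\ge B/(2\sqrt{w})$ (the paper's square-root concavity step, applied to the same perfect-square bound) yields the admissible perturbation radius. You are in fact slightly more careful than the paper, which silently drops the factor $(1-\eta)^{r-1}+\eta^{r-1}\ge 2^{2-r}$ that genuinely arises when deriving the Jensen-type bound from Definition~\ref{def:fct_uniform_convexity}, so your worst-case constant $\alpha = 2^{2-r}\mu/(2\sqrt{2wL})$ is smaller than the stated one for $r>2$ (the paper's own proof in any case ends with $\mu/(2\sqrt{2wL})$, a factor $2$ below the statement); this affects only $r$-dependent constants, not the substance of the lemma.
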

\begin{proof}
The proof follows exactly that of \citep[Theorem 12]{journee2010generalized}, replacing $||x-y||^2$ with $||x-y||^r$. We state it for the sake of completeness. Consider $w_0>0$, $(x,y)\in \mathcal{L}_{w}$ and $\gamma \in [0,1]$. We denote $z= \gamma x + (1 - \gamma) y$. For $u\in\mathbb{R}^d$, by $L$-smoothness applied at $z$ and at $x^*$ (the unconstrained optimum of $f$), we have
\begin{eqnarray*}
f(z + u) & \leq & f(z) + \langle \nabla f(z); u\rangle + \frac{L}{2} ||u||_2^2\\
         & \leq & f(z) + ||\nabla f(z)||\cdot ||u|| + \frac{L}{2} ||u||_2^2\\
         & \leq & f(z) + \sqrt{2L f(z)}||u|| + \frac{L}{2} ||u||_2^2 = \Big( \sqrt{f(z)} + \sqrt{\frac{L}{2}} ||u|| \Big)^2~.
\end{eqnarray*}
Note that uniform convexity of $f$ implies that 
\[
f(z) \leq \gamma f(x) + (1-\gamma) f(y) - \frac{\mu}{2} \gamma (1 - \gamma) ||x-y||^r 
\]
In particular then, because $x,y\in \mathcal{L}_w$,  we have $f(z)\leq w - \frac{\mu}{2} \gamma (1 - \gamma) ||x-y||^r$ so that
\BEQ
f(z + u) \leq \Big( \sqrt{w - \frac{\mu}{2} \gamma (1 - \gamma) ||x-y||^r} + \sqrt{\frac{L}{2}} ||u|| \Big)^2
\EEQ
Leveraging on the concavity of the square-root, we get
\BEQ
f(z + u) \leq \Big( \sqrt{w} - \frac{\mu}{4\sqrt{w}} \gamma (1 - \gamma) ||x-y||^r + \sqrt{\frac{L}{2}} ||u|| \Big)^2~.
\EEQ
Hence for any $u$ such that
\[
||u|| = \frac{\mu}{2\sqrt{2 w L}} \gamma (1 - \gamma) ||x-y||^r~,
\]
we have $z + u\in\mathcal{L}_w$. Hence $\mathcal{L}_w$ is a $(\frac{\mu}{2\sqrt{2 w L}}, r)$-uniformly convex set.
\end{proof}

Lemma \ref{lem:level_set_uni_smooth} restrictively requires smoothness of the uniformly convex function $f$. Hence we provide the analogous of \citep[Lemma 3]{garber2015faster}.

\begin{lemma}\label{lem:level_set_uni}
Consider a finite dimensional normed vector space $(\mathbb{X}, ||\cdot||)$. Assume $f(x)=||x||^2$ is $(\mu, s)$-uniformly convex function (with $r\geq 2$) with respect to $||\cdot||$. Then the norm balls $B_{||\cdot||}(r)=\Big\{x\in\mathbb{X}~|~ ||x||\leq r\Big\}$ are $(\frac{\mu}{2r}, s)$-uniformly convex.
\end{lemma}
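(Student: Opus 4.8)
The plan is to mirror the proof of Lemma~\ref{lem:level_set_uni_smooth}, but to replace the smoothness-based upper envelope $f(z+u)\le(\sqrt{f(z)}+\sqrt{L/2}\,\|u\|)^2$ by the exact bound that is available for $f=\|\cdot\|^2$: by the triangle inequality, $f(z+u)=\|z+u\|^2\le(\|z\|+\|u\|)^2$ for every $u$. This substitution is precisely what lets us drop the smoothness hypothesis while keeping the same scheme. So I fix $x,y\in B_{\|\cdot\|}(r)$, a weight $\eta\in[0,1]$, and set $z=\eta x+(1-\eta)y$; the goal is to produce a radius $\rho$ proportional to $\eta(1-\eta)\|x-y\|^s$ with $z+u\in B_{\|\cdot\|}(r)$ for all $\|u\|\le\rho$, since by Definition~\ref{def:set_uniform_convexity} this is exactly the defining condition of $(\alpha,s)$-uniform convexity of the ball with $\alpha=\rho/(\eta(1-\eta)\|x-y\|^s)$.

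First I would record the membership reformulation: $z+u\in B_{\|\cdot\|}(r)$ for every $\|u\|\le\rho$ is equivalent to $\|z\|\le r-\rho$, because $\sup_{\|u\|\le\rho}\|z+u\|=\|z\|+\rho$; here the perturbation $u$ plays the role of the term $\eta(1-\eta)\alpha\|x-y\|^s z'$ (with $z'$ of unit norm) in Definition~\ref{def:set_uniform_convexity}. Next I would feed in the uniform convexity of $f=\|\cdot\|^2$ in its two-point form, exactly as invoked in the proof of Lemma~\ref{lem:level_set_uni_smooth}, to obtain $\|z\|^2=f(z)\le \eta f(x)+(1-\eta)f(y)-\frac{\mu}{2}\eta(1-\eta)\|x-y\|^s$. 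Since $\|x\|^2,\|y\|^2\le r^2$, the convex combination $\eta f(x)+(1-\eta)f(y)$ is at most $r^2$, which yields $\|z\|^2\le r^2-\frac{\mu}{2}\eta(1-\eta)\|x-y\|^s$.

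The final step converts this bound on $\|z\|^2$ into the required bound on $\|z\|$. Writing $A=\frac{\mu}{2}\eta(1-\eta)\|x-y\|^s$ and using the concavity estimate $\sqrt{r^2-A}\le r-\frac{A}{2r}$ (valid for $0\le A\le r^2$, where the burn-in-free regime $A\le r^2$ is harmless), I get $\|z\|\le r-\frac{A}{2r}$. Choosing $\rho=\frac{A}{2r}=\frac{\mu}{4r}\eta(1-\eta)\|x-y\|^s$ then gives $\|z+u\|\le\|z\|+\|u\|\le r$ for all $\|u\|\le\rho$, which is precisely the claimed uniform convexity of $B_{\|\cdot\|}(r)$ with exponent $s$ and modulus constant $\frac{\mu}{4r}$.

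The main point requiring care is tracking the multiplicative constant through the square-root linearization: the concavity step $\sqrt{r^2-A}\le r-A/(2r)$ is exactly where the factor $1/(2r)$ enters, and the precise constant ($\mu/2r$ as stated versus $\mu/4r$ as I obtain) depends only on the normalization chosen for the two-point form of uniform convexity of $f$ — the same factor-of-two slack already appears between the statement and the proof of Lemma~\ref{lem:level_set_uni_smooth}. I would also emphasize that no differentiability of $\|\cdot\|^2$ is ever used, since uniform convexity of $f$ enters solely as a two-point inequality; this is what makes the hypotheses genuinely weaker than in Lemma~\ref{lem:level_set_uni_smooth}, at the price of applying only to norm balls rather than to arbitrary level sets.
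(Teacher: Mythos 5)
Your proof is correct and takes essentially the same route as the paper's: the triangle inequality $\|z+u\|\le\|z\|+\|u\|$ in place of the smoothness envelope, the two-point form of the uniform convexity of $f(x)=\|x\|^2$ to get $\|z\|^2\le r^2-\frac{\mu}{2}\eta(1-\eta)\|x-y\|^s$, and the concavity estimate $\sqrt{r^2-A}\le r-\frac{A}{2r}$, yielding the same admissible perturbation radius $\frac{\mu}{4r}\eta(1-\eta)\|x-y\|^s$. The factor-of-two slack you flag between your constant $\mu/(4r)$ and the stated $\mu/(2r)$ is indeed also present in the paper's own proof, which likewise only establishes the constant $\mu/(4r)$ under Definition~\ref{def:set_uniform_convexity}.
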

\begin{proof}
The proof follows exactly that of \citep[Lemma 3]{garber2015faster} which itself follows that of \citep[Theorem 12]{journee2010generalized}, where operations involving $L$-smoothness are replaced by an application of the triangular inequality.

Let's consider $s \geq 2$, $(x,y)\in B_{||\cdot||}(r)$ and $\gamma \in [0,1]$. We denote $z= \gamma x + (1 - \gamma) y$. For $u\in\mathbb{X}$, applying successively triangular inequality and $(\mu, s)$-uniform convexity of $f(x)=||x||^2$, we get
\begin{eqnarray*}
f(z+u) = ||z + u||^2 &\leq& \Big(\sqrt{f(z)} + ||u||\Big)^2 \\
&\leq& \Big(\sqrt{r^2 - \frac{\mu}{2}\gamma(1-\gamma)||x-y||^s} + ||u||\Big)^2~.
\end{eqnarray*}
We then use concavity of the square root as before to get
\[
||z + u||^2 \leq \Big(r - \frac{\mu}{4 r}\gamma(1-\gamma)||x-y||^s + ||u||\Big)^2~.
\]
In particuler, for $u\in\mathbb{X}$ such that $||u|| = \frac{\mu}{4 r}\gamma(1-\gamma)||x-y||^s$, we have $z+u\in B_{||\cdot||}(r)$. Hence $B_{||\cdot||}(r)$ is $(\frac{\mu}{2r},s)$- uniformly convex with respect to $||\cdot||$.
\end{proof}

These previous lemmas hence allow to translate functional uniformly convex results into results for classic balls norms. For instance, \citep[Lemma 17]{shai2007phd} showed that for $p\in]1,2]$ $f(x) = 1/2||x||_p^2$ was $(p-1)$-uniformly convex with respect to $||\cdot||_p$.

\end{document}